

\documentclass[final,3p,times]{elsarticlearxiv}
\usepackage{graphicx}        
\usepackage{multicol}        
\usepackage[bottom]{footmisc}
\usepackage{mdframed}

\usepackage[english]{babel}
\usepackage{amssymb,amstext,amsmath,amsthm,mathabx}
\usepackage{hyperref}
\usepackage{array}   
\newcolumntype{L}{>{$}l<{$}}
\theoremstyle{definition}
\newtheorem{definition}{Definition} 
\newtheorem{remark}{Remark} 
\newtheorem{theorem}{Theorem} 
 
\sloppy
\newcommand{\no}[1]{\widebar{#1}}
\renewcommand{\bot}{\emptyset}

\def\F{\mathcal F}
\def\P{\mathcal P}
\def\M{\mathcal M}
\def\I{\mathcal I}
\def\H{\mathcal H}
\def\pr{\mathbb{P}}
\def\prev{\mathbb{P}}
\def\K{\mathcal{K}}

\sloppy




\usepackage{amssymb}


\journal{}

\begin{document}

\begin{frontmatter}





\title{Probabilistic inferences from conjoined to  iterated conditionals	\tnoteref{mytitlenote}
}
\tnotetext[mytitlenote]{
This is a substantially extended version of a paper   (\cite{GOPS16}) presented at the 8\textsuperscript{th} International Conference   Soft Methods in Probability and Statistics 2016 (SMPS 2016) held in Rome in September 12--14, 2016.
}

\author[gs]{Giuseppe Sanfilippo\corref{cor1}}
\address[gs]{Department of Mathematics and Computer Science, Via Archirafi 34, 90123 Palermo, Italy}
\ead{giuseppe.sanfilippo@unipa.it}
\author[np]{Niki Pfeifer}
\address[np]{Munich Center for Mathematical Philosophy, Ludwig-Maximilians-University Munich, Ludwigstra\ss e 31, 80539, Munich, Germany}
\ead{niki.pfeifer@lmu.de}
\author[do]{David E.\ Over}
\address[do]{Department of Psychology, Durham University, Science Site, South Road, Durham {DH1~3LE}, United Kingdom}
\ead{david.over@durham.ac.uk}
\author[ag]{Angelo Gilio\fnref{fn1}}
\address[ag]{Department of Basic and Applied Sciences for Engineering, University of Rome ``La Sapienza'', Via A. Scarpa 14, 00161 Roma, Italy}
\ead{angelo.gilio@sbai.uniroma1.it}
\fntext[fn1]{Retired}
\cortext[cor1]{Corresponding author}

\begin{abstract}
There is wide support in logic, philosophy, and psychology for the hypothesis that the probability of the indicative conditional of natural language,  $P(\textit{if } A \textit{ then } B)$, is the conditional probability of $B$ given $A$, $P(B|A)$. We identify a conditional which is such that $P(\textit{if } A \textit{ then } B)= P(B|A)$ with de Finetti's conditional event, $B|A$. An objection to making this identification in the past was that it appeared unclear how to form compounds and iterations of conditional events. In this paper, we illustrate how to overcome this objection with a probabilistic analysis, based on coherence, of these compounds and iterations. We interpret the compounds and iterations as conditional random quantities which, given some logical dependencies, may reduce to conditional events.
We  show  how the inference to $B|A$ from  $A$ and $B$  can be extended to compounds and iterations of both conditional events and biconditional events.
Moreover,  we determine the respective uncertainty propagation rules.
Finally, we make some comments on extending our analysis to counterfactuals.
\end{abstract}

\begin{keyword}
Centering \sep
Coherence \sep
p-entailment \sep
Compound conditionals \sep
Iterated conditionals \sep
Counterfactuals





\end{keyword}

\end{frontmatter}



\section{Introduction}
There is wide agreement in logic and philosophy that the indicative conditional of natural language, \textit{if A then
B}, cannot be adequately represented as the material conditional of binary  logic, logically equivalent to
$\no A \vee B$ (\textit{not-$A$ or $B$}) \cite{edgington14}. Psychological studies have also shown that ordinary people do not judge the
probability of $\textit{if } A \textit{ then } B$,  $P(\textit{if } A \textit{ then } B)$, to be the probability of the material conditional,
$P(\no A \vee B)$, but rather tend to assess it as the conditional probability of $B$ given $A$, $P(B|A)$, or at least to converge on this assessment \cite{baratgin13a,evans03,fugard11a,Takahashi17,pfeifer12x,pfeifer13b,singmann14}. These
psychological results have been taken to imply \cite{baratgin13a,evans04,gilio12,oaksford07,pfeifer13b,pfeifer13}, that $\textit{if } A \textit{ then } B$ is best represented, either as the probability
conditional of Adams \cite{adams98}, or as the \textit{conditional event} $B|A$ of de Finetti \cite{definetti36,definetti37},
the probability of which is $P(B | A)$. 
We will adopt the latter view in the present paper and base our analysis on conditional events and coherence (for related analyses, specifically on categorical syllogisms, squares of opposition under coherence and on generalized argument forms see \cite{gilio16,PfSa17,PS2017,ECSQARU17}).
 One possible
objection to holding that $P(\textit{if } A \textit{ then } B) = P(B | A)$ is that it is supposedly unclear how
this relation extends to compounds of conditionals and makes sense of them  \cite{douven15,edgington14,wijnbergen15}. Yet consider:
\begin{equation}\label{S:MotherPrem}
 \overbrace{\textit{She will be  angry}}^{a} \text{ if } \overbrace{\textit{her son gets a }\text{B}}^{b} \text{ and } \overbrace{\textit{she will be  furious}}^{f}   \text{ if } \overbrace{\textit{he gets a }\text{C}}^{c}.
\end{equation}
The above conjunction appears to make sense, as does the following seemingly even more complex conditional construction
\cite{douven15}:
\begin{equation}\label{S:MotherConcl}
\text{If } \textit{she will be angry} \text{ if } \textit{her son gets a }\text{B, then } \textit{she will be furious } \text{if } \textit{he gets a }\text{C}.
\end{equation}
We will show below, in reply to the objection, how to give sense to~(\ref{S:MotherPrem}) and~(\ref{S:MotherConcl}) 
in terms of \emph{compound conditionals}. Specifically, we will interpret~(\ref{S:MotherPrem}) as a  \emph{conjunction} of two conditionals ($a|b$ and $f|c$) and~(\ref{S:MotherConcl})  in terms of a \emph{conditional} whose antecedent ($a|b$) and consequent ($f|c$) are both conditionals (if $a|b$, then $f|c$). 
But we note first that the \emph{iterated conditional}~(\ref{S:MotherConcl}) validly follows from the conjunction~(\ref{S:MotherPrem}) by the form of inference we will call \textit{centering}  which, as we will show,  can be extended to  the compounds of conditionals (see Section~\ref{SEC:CENTERING} below). 
We point out  that our framework is  quantitative  rather than a logical one. 
Indeed in our approach, syntactically conjoined and iterated conditionals in natural language are analyzed as conditional random quantities, which can sometimes reduce to conditional events, given logical dependencies (\cite{GiSa13a,GiSa14}). For instance, the biconditional event $A||B$, which we will define by $(B | A) \wedge (A | B)$,  reduces to the conditional $(A\wedge B)|(A\vee B)$. Moreover, the notion of \emph{biconditional centering} will be given.  

The outline of the paper is as follows.
In Section~\ref{SEC:BasicNotions} we give  some preliminaries on the  notions of coherence and p-entailment for conditional random quantities, which assume values in $[0,1]$.
In Section~\ref{SEC:CENTERING}, after  recalling the notions of conjoined conditional and iterated conditional, we 
study the p-validity of centering in  the case where the basic events are replaced by conditionals.  In Section~\ref{SEC:LUBounds2PremsiseCentering} we give some results on coherence, by determining  the lower and  upper bounds for the conclusion of  two-premise centering; we  also examine  the classical case by obtaining  the  same lower and  upper bounds.
In Section~\ref{SEC:BICOND}, after recalling the classical biconditional introduction rule,  we present an analogue in terms of conditional events (\emph{biconditional AND rule});  we also obtain  one-premise and two-premise biconditional centering. In Section~\ref{SEC:LUBoundsTwoPremBiConCent} we  determine the  lower and   upper bounds for the conclusion of  two-premise biconditional centering. 
In Section~\ref{SEC:ReversedInferences} we  investigate reversed inferences (i.e., inferences from the conclusion to its premises), by determining the lower and upper bounds for the premises of  the biconditional AND rule. Section~\ref{SEC:COUNTERFACTUALS} sketches how to apply results of this paper to study selected counterfactuals, and
remark that the Import-Export Principle is not valid in our
approach which allows us to avoid Lewis' notorious triviality results.
 Section~\ref{SEC:CONCLUSION} concludes with some remarks on future work. 
Further details which expand Section~\ref{SEC:BasicNotions} are given in \ref{Appendix}.
\section{Some preliminaries}
\label{SEC:BasicNotions}
The
coherence-based approach to probability and to other uncertain measures has been adopted by many
authors (see, e.g.,
\cite{biazzo02,biazzo05,capotorti14,CaLS07,
coleti14FSS,
coletti16,
Coletti2016,coletti02,coletti15possibilistic,gilio02,gilio13ins,pfeifer13b,Wallmann2014});
we  recall below some basic aspects on the notions of coherence and of p-entailment.
In  \ref{Appendix} we will give further details on coherence of probability and prevision assessments.
\subsection{Events and constituents}
\label{sect:2.1}
In our approach  events represent uncertain facts
described by  (non ambiguous) logical propositions.   An event $A$ is
a two-valued logical entity which is either  true ($T$), or false ($F$).
The indicator of an event $A$ is a two-valued numerical quantity which is 1,
or 0, according to  whether $A$ is true, or false, respectively, and we use
 the same symbol to refer to an  event and its indicator.
We  denote by
$\Omega$ the sure event and by $\bot$ the impossible one (notice that, when necessary,
the symbol $\bot$  will denote the empty set).
Given two events $A$ and $B$,  we denote by $A\land B$ the logical intersection, or conjunction, of $A$ and $B$; moreover, we denote by  $A \vee B$ the  logical union, or disjunction, of $A$ and $B$. To simplify
notations, in many cases we  denote the conjunction of  $A$ and $B$  (and its indicator) as  $AB$;  
of course, $AB$ coincides with the product of  $A$ and $B$.
We denote by $\no{A}$ the negation of $A$. 
Of course, the truth values for conjunctions, disjunctions and
negations are obtained by applying the propositional logic. 
Given any events $A$ and $B$, we simply write $A \subseteq B$ to denote
that $A$ logically implies $B$, that is  $A\no{B}=\bot$, which
means that $A$ and $\no{B}$ cannot both be true.

Given  $n$ events
$A_1, \ldots, A_n$, as $A_i \vee \no{A}_i = \Omega \,,\;\; i = 1,
\ldots, n$,  by expanding the expression $\bigwedge_{i=1}^n(A_i \vee
\no{A}_i)$, we obtain
\[
\Omega=\bigwedge_{i=1}^n(A_i \vee
\no{A}_i)=(A_1\cdots A_n) \vee (A_1\cdots A_{n-1}\no{A}_n) \vee \cdots \vee (\no{A}_1\cdots \no{A}_n);
\]
that is the sure event $\Omega$ is represented as the disjunction of $2^n$
logical conjunctions. By discarding the conjunctions which are impossible (if any),  
 the  remaining ones are the  \emph{constituents} generated by $A_1,
\ldots, A_n$. Of course,
the constituents are pairwise logically incompatible; then, they are a partition of  $\Omega$. 
We recall that $A_1, \ldots, A_n$ are logically
independent when the number of constituents generated by them is $2^n$. Of
course, in case of some logical dependencies among $A_1, \ldots,
A_n$, the number of constituents is less than $2^n$.
For instance, given two
 events $A,B$, with $A\subseteq B$,  the constituents 
are: $AB, \no{A}B, \no{A}\no{B}$. If not stated otherwise, we assume logical independence throughout the paper. 
\subsection{Conditional events and coherent  probability assessments}
\label{sect:2.2}
Given two events $E,H$,
with $H \neq \bot$, the conditional event $E|H$
is defined as a three-valued logical entity which is true (T), or
false (F), or void (V), according to whether $EH$ is true, or $\no{E}H$
is true, or $\no{H}$ is true, respectively.
The notion of logical inclusion among events has been generalized to conditional events by Goodman and Nguyen in \cite{GoNg88} (see also \cite{gilio13ins}). Given two conditional events 
$E_1|H_1$ and $E_2|H_2$, we say that $E_1|H_1$ implies $E_2|H_2$, denoted by $E_1|H_1 \subseteq E_2|H_2$, iff $E_1H_1$ {\em true} implies $E_2H_2$ {\em true} and $\no{E}_2H_2$ {\em true} implies $\no{E}_1H_1$ {\em true}; i.e., iff $E_1H_1 \subseteq E_2H_2$ and $\no{E}_2H_2 \subseteq \no{E}_1H_1$.

 We recall that, agreeing to
the betting metaphor, if you assess $P(E|H)=p$, then, for every  real number $s$, you are willing
to pay  an amount $ps$ and  to receive $s$, or 0, or $ps$, according to
whether $EH$ is true, or $\no{E}H$ is true, or $\no{H}$ is true (bet
called off), respectively. 
Then, the random gain associated with the assessment $P(E|H)=p$ is 
$G = sH(E-p)$.

Given a real function $P : \; \mathcal{K}
\, \rightarrow \, \mathcal{R}$, where $\mathcal{K}$ is an arbitrary
family of conditional events, let us consider a subfamily
$\mathcal{F}_n = \{E_1|H_1, \ldots, E_n|H_n\}$ of 
$\mathcal{K}$, and the vector $\mathcal{P}_n =(p_1, \ldots, p_n)$,
where $p_i = P(E_i|H_i) \, ,\;\; i = 1, \ldots, n$. We denote by
$\mathcal{H}_n$ the disjunction $H_1 \vee \cdots \vee H_n$. As
$E_iH_i \vee \no{E}_iH_i \vee \no{H}_i = \Omega \,,\;\; i = 1, \ldots, n$,
by expanding the expression $\bigwedge_{i=1}^n(E_iH_i \vee \no{E}_iH_i
\vee \no{H}_i)$ we can represent $\Omega$ as the disjunction of $3^n$
logical conjunctions, some of which may be impossible. 
The
remaining ones are the constituents generated by 
 $\mathcal{F}_n$ and, of course, are a partition of $\Omega$.
We denote by $C_1, \ldots, C_m$ the constituents which logically imply 
$\mathcal{H}_n$ and (if $\mathcal{H}_n \neq \Omega$) by $C_0$ the
remaining constituent $\mathcal{\no{H}}_n = \no{H}_1 \cdots \no{H}_n$, so
that
\[
\mathcal{H}_n = C_1 \vee \cdots \vee C_m \,,\;\;\; \Omega =
\mathcal{\no{H}}_n \vee
\mathcal{H}_n = C_0 \vee C_1 \vee \cdots \vee C_m \,,\;\;\; m+1 \leq 3^n
\,.
\]
In the context of betting scheme, with the pair $(\mathcal{F}_n, \mathcal{P}_n$) we associate the random gain $G = \sum_{i=1}^n s_iH_i(E_i - p_i)$,
where $s_1, \ldots, s_n$ are $n$ arbitrary real numbers. We observe that $G$ is the difference between the amount that you receive, $\sum_{i=1}^n s_i(E_iH_i + p_i\no{H}_i)$, and the amount that you pay, $\sum_{i=1}^n s_ip_i$, and represents the net gain from engaging each transaction $H_i(E_i - p_i)$, the scaling and meaning (buy or sell) of the transaction being specified by the magnitude and the sign of $s_i$ respectively.
 Let $g_h$
be the value of $G$ when $C_h$ is true; then $G\in \mathcal{D}=\{g_0,g_1,\ldots,g_m\}$.
Of course, $g_0 = 0$. We denote by  $\mathcal{D}_{\mathcal{H}_n}$  the set of values of $G$ restricted to $\H_n$, that is $\mathcal{D}_{\mathcal{H}_n}=\{g_1, \ldots, g_m\}$.
\begin{definition}\label{COER-BET}  The function $P$ defined on $\mathcal{K}$ is said to be {\em coherent}
		if and only if, for every integer $n$, for every finite subfamily $\mathcal{F}_n$ of 
		$\mathcal{K}$ and for every  real numbers $s_1, \ldots, s_n$, one has:
		$\min  \mathcal{D}_{\mathcal{H}_n} \leq 0 \leq \max \mathcal{D}_{\mathcal{H}_n}$.
\end{definition}
Notice that the condition $\min \mathcal{D}_{\mathcal{H}_n}  \leq 0 \leq \max \mathcal{D}_{\mathcal{H}_n}$ can be written in two equivalent ways: $\min  \mathcal{D}_{\mathcal{H}_n} \leq 0$, or  $\max  \mathcal{D}_{\mathcal{H}_n} \geq 0$.  As shown by Definition \ref{COER-BET}, a probability assessment is coherent if and only if, for any finite combination of $n$ bets, it does not happen that the values $g_1, \ldots, g_m$ are all positive, or all negative ({\em no Dutch Book}).
Further technical details on coherence of probability assessments on  conditional events and  on conditional random quantities are given in \ref{Appendix}.

\subsection{Conditional random quantities and the notions of p-consistency and p-entailment}
In what follows, if not specified otherwise, we will consider conditional random quantities  which take values in a finite subset of $[0,1]$.
Based on the notions of p-consistency and p-entailment of Adams (\cite{adams75}),  which were formulated for conditional events in the setting of coherence (see, e.g., \cite{gilio10,gilio11ecsqaru,GiSa13IJAR}),  we will generalize these notions to these conditional random quantities. Let $X|H$ be a finite conditional random quantity and let  $\{x_{1}, \ldots,x_{r}\}$ denote  the set of possible values for the restriction of $X$ to $H$.
 Then,  $X|H\in[0,1]$ if and only if $x_{j}\in[0,1]$ for each $j=1,\ldots,r$; indeed  in this case  coherence requires that $\prev(X|H)\in[0,1]$ (see, e.g., \cite{GiSa14}).
 \begin{definition}\label{PC}
Let $\mathcal{F}_n = \{X_i|H_i \, , \; i=1,\ldots,n\}$ be  a family of $n$  conditional random quantities which take values in a finite subset of $[0,1]$. Then, $\mathcal{F}_n$ is  {\em p-consistent} if and only if,
the (prevision) assessment $(\mu_1,\mu_2,\ldots,\mu_n)=(1,1,\ldots,1)$ on $\mathcal{F}_n$ is coherent.
\end{definition}
\begin{definition}\label{PE}
		A p-consistent family $\mathcal{F}_{n} = \{X_{i}|H_{i} \, , \; i=1,
		\ldots ,n\}$ \emph{p-entails} a conditional random quantity $X|H$ which
		takes values in a finite subset of $[0,1]$, denoted by $\mathcal{F}
		_{n} \; \models_{p} \; X|H$, if and only if for any coherent (prevision)
		assessment $(\mu_{1},\ldots ,\mu_{n},z)$ on $\mathcal{F}_{n} \cup \{X|H
		\}$ it holds that: if $\mu_{1}=\cdots =\mu_{n}=1$, then $z=1$.

\end{definition}
Of course, when $\mathcal{F}_n$ p-entails $X|H$, there may be coherent assessments $(\mu_1,\ldots,\mu_n,z)$ with $z \neq 1$, but in such case $\mu_i \neq 1$ for at least one index $i$.
We say that the inference from $\F_n$ to $X|H$ is \emph{p-valid} if and only if  $\mathcal{F}_n \; \models_p X|H$.

Now, we  generalize  the notion of p-entailment between two finite families of conditional random quantities  $\mathcal{F}$ and $\mathcal{F}'$.
\begin{definition}\label{ENTAIL-FAM}
Given two p-consistent finite families of conditional random quantities $\mathcal{F}$ and $\mathcal{F}'$, we say that $\mathcal{F}$ p-entails $\mathcal{F}'$ if  and only if $\mathcal{F}$ p-entails $X|H$, for every $X|H \in\mathcal{F}'$.
\end{definition}
\noindent {\em Transitivity property of p-entailment:} Of course, p-entailment is transitive; that is, given three p-consistent families of conditional random quantities $\mathcal{F}, \mathcal{F}',\mathcal{F}''$, if $\mathcal{F} \; \models_p \; \mathcal{F}'$ and $\mathcal{F}' \; \models_p \; \mathcal{F}''$, then $\mathcal{F} \; \models_p \; \mathcal{F}''$.
\begin{remark}\label{ENTAIL-SUB} Notice that, from Definition \ref{PE}, we trivially have that $\mathcal{F}$ p-entails $X|H$, for every $X|H \in \mathcal{F}$; then, by Definition \ref{ENTAIL-FAM}, it immediately follows
\begin{equation}
\mathcal{F} \; \models_p \; \mathcal{F}' \;,\;\; \forall \, \mathcal{F}' \subseteq \mathcal{F} \,,\; \mathcal{F}' \neq \emptyset \,.
\end{equation}
\end{remark}
\begin{remark}
Notice that, if we consider conditional events instead of conditional random quantities, we recover the usual notions of p-consistency, p-entailment, and p-validity.
\end{remark}
\section{Centering}
\label{SEC:CENTERING}
	Given a conditional event $B|A$, if you assess $P(B|A)=x$, then for the indicator of $B|A$ we have $B|A=AB+x\widebar{A}$ (see Appendix \ref{SEC:EXTENDED}).
	Thus,  when the conditioning
	event $A$ is true then $B|A$ has the same  value as $B$ and as $AB$, while,  when the conditioning
	event $A$ is false then $B|A$ coincides with $x=P(B|A)$.
		This aspect seems related to the notion of (strong) centering
		used in Lewis' logic (\cite{lewis73}) in order to assign truth
		values to counterfactuals. In Remark~\ref{REM:BAYES} of this section we will show  that ${(B|A) \wedge A}=AB$, that is $(B|A) \wedge A$ and $AB$ are the same object; then, by the compound probability theorem, it holds that  $P[(B|A) \wedge A]=P(AB)=P(B|A)P(A)$ and then $P[(B|A) \wedge A]=P(AB)\leq P(B|A)$. This  inequality also follows by the Goodman \& Nguyen inclusion relation  $AB\subseteq B|A$ (\cite[Theorem~6]{gilio13ins}).
 We recall that the equality $P[(B|A) \wedge A]=P(B|A)P(A)$ in \cite{hajek94} is named  ``the probabilistic version of centering'' and it  has been usually looked at as a probabilistic independence  of the conditional $\textit{if } A \textit{ then } B$ from its premise $A$.  We also recall that in  \cite[footnote 5]{hajek94} Hajek and Hall observe that ``\emph{centering is a slight misnomer, since this name usually refers to a property of the nearness relation used to give the truth conditions for the conditional (each world is the nearest world to itself)''}\footnote{
 Lewis distinguished between ``centering'' and ``weak centering'' (\cite{lewis73}). We use ``centering'' simply for the equivalent of the former, ``strong'' notion   on centering. There is psychological evidence that ordinary people conform to this notion of centering for indicatives (\cite{cruz16,pfeiferTulkki17}).}. 
Interestingly, in \cite{Adams77} Adams  has shown that the Lewis theory of nearest possible worlds can be interpreted as a theory of worlds nearest in probability; in other words, according to Adams' viewpoint, the Lewis logic may be considered as  ``the logic not of truth, but of high probability''. By the previous remarks and  in agreement with \cite{hajek94} (see also \cite[p. 442]{hajek15}), we simply use the term centering also for the kind of inferences which we study in this paper.

There is \textit{one-premise centering}: inferring $\textit{if } A \textit{ then } B$ from the single premise $AB$. And \textit{two-premise centering}:
inferring $\textit{if } A \textit{ then } B$ from the two separate premises $A$ and $B$. 
Centering is valid for quite a wide range of conditionals (\cite{cruz15,cruz16,over16}).  It is clearly valid for the material conditional, since $\textit{not-}A \textit{ or } B$ must be true when  $A \textit{ and } B$ is true. It is also valid for Lewis conditional $\textit{if } A \textit{ then } B$ (\cite{lewis73}), which holds, roughly, when $B$ is true in the closest world in which $A$ is true. In \cite{lewis73} Lewis has a semantic condition of centering, which states that the actual world is the closest world to itself.
The characteristic axiom for this semantic condition is what we are also calling centering. It is probabilistically valid, \emph{p-valid}, for the conditional event, i.e. $AB$ p-entails $B|A$ and $\{A,B\}$ p-entails $B|A$.  Centering is, however, not valid for inferentialist accounts of conditionals, where an inferential relation between antecedent and consequent is presupposed (see, e.g., \cite{douven16}).

A (p-consistent) set of premises p-entails a conclusion if and only if the conclusion must have probability one when all the premises have probability one  \cite{GiSa13IJAR}. 
Clearly, one-premise centering is p-valid, indeed the p-entailment of $B|A$ from $AB$ follows by observing that $P(AB) = P(A)P(B | A)$ and so
$P(AB)\leq P(B | A)$: if $P(AB)=1$, then $P(B | A)=1$. 
 Two-premise centering is also clearly p-valid, as
it is p-valid to infer $AB$ from $A$ and $B$, and then one-premise centering can be used to infer
$B | A$:  if $P(A) = x$ and $P(B) = y$,  coherence requires that $P(AB)$ has to be in the
interval $[\max \{x + y - 1,0\}, \min\{x,y\}]$, with $P(AB)\leq P(B|A)$. Therefore,   if  $P(A)=P(B)=1$, it follows ${P(AB)=}$ $P(B|A)=1$ and then  $\{A,B\} \text{ p-entails } B|A$.

We will study the p-validity of generalized versions of one-premise and two-premise centering, where the unconditional events $A$ and $B$ are replaced by the conditional events $A|H$ and $B|K$, respectively.
These kinds of centering involve the notions of  conjunction and of iterated conditioning  for conditional events.  
Conjunction and iteration among conditionals  have been studied  from the viewpoint of random variables  by many authors (see, e.g. \cite{Jeff91,Kauf05,Kauf09,StJe94}); for an overview on conditionals, see, e.g., \cite{adams98,edgington95,Eells94}.
In our approach we exploit recent results obtained  in the setting of coherence 
for conditional random quantities  (see, e.g. \cite{GiSa13c,GiSa13a,GiSa14,GiSa17}). 
\subsection{Conjunction of two conditional events} 
We recall and discuss the notion of conjunction of two conditional events.
Note that, in numerical terms, two conditional events $A|H$ and $B|K$, with $P(A|H)=x$ and $P(B|K)=y$,  coincide with the random quantities $AH+x\no{H}$ and $BK+y\no{K}$, respectively. 
Then,  $\min \, \{A|H, B|K\}=\min \, \{AH+x\no{H}, BK+y\no{K}\}$.
\begin{definition}[Conjunction]\label{CONJUNCTION}{\rm Given any pair of conditional events $A|H$ and $B|K$, with $P(A|H) = x, P(B|K) = y$, we define their conjunction as the conditional random quantity
\[
(A|H) \wedge (B|K) = \min \, \{A|H, B|K\} \,|\, (H \vee K) =\min \, \{AH+x\no{H}, BK+y\no{K}\}\,|\, (H \vee K).
\]
}\end{definition}
Then, defining $z=\mathbb{P}[(A|H)\wedge(B|K)]$, we have
\begin{equation}\label{EQ:CONJUNCTION}
(A|H)\wedge(B|K) =\left\{\begin{array}{ll}
1, &\mbox{ if $AHBK$  is true,}\\
0, &\mbox{ if  $\no{A}H\vee \no{B}K$ is true,}\\
x, &\mbox{ if $\no{H}BK$ is true,}\\
y, &\mbox{ if $AH\no{K}$ is true,}\\
z, &\mbox{ if $\no{H}\no{K}$ is true}.
\end{array}
\right.
\end{equation}
From (\ref{EQ:CONJUNCTION}), the conjunction  $(A|H) \wedge (B|K)$ is the following  random quantity
\begin{equation}\label{EQ:REPRES}
(A|H) \wedge (B|K)=1 \cdot AHBK + x \cdot \no{H}BK + y \cdot AH\no{K} + z \cdot \no{H}\no{K}\,.
\end{equation}
 Notice that the quantity  $z=\mathbb{P}[(A|H)\wedge(B|K)]$ represents the value that you assess,  with the proviso that, for each real number $s$,  you will pay the amount $sz$ by receiving   the random quantity $s[(A|H) \wedge (B|K)]$. In particular, if $s=1$,  then you agree to pay $z$ with the proviso that you will receive: $1$, if both conditional events are true; $0$, if at least one of the conditional events is false;
$x$, if  $A|H$ is void and  $B|K$ is true;
$y$, if  $B|K$ is void and $A|H$ is true;
   $z$, if both conditional events are void.
Notice that this notion of conjunction, with positive probabilities for the conditioning events, has been already proposed in \cite{mcgee89}.
\begin{remark}\label{REM:BAYES}
We remark that in particular, given two events $A$ and $H$, with $H\neq \bot$, $P(A|H)=x$, $P(H)=y$, $\prev[(A|H)\wedge H]=z$, by (\ref{EQ:REPRES})  it holds that 
\begin{equation}
(A|H)\wedge H=(A|H)\wedge (H|\Omega)=  AHH\Omega + x \cdot \no{H}H\Omega + y \cdot AH\bot + z \cdot \no{H}\bot=
AH.
\end{equation}
Then, the conjunction $(A|H)\wedge H$ is equivalent to the unconditional  event $AH$ and $\prev[(A|H)\wedge H]=P(AH)=P(A|H)P(H)$.
\end{remark}
Notice that the notion of conjunction given in Definition~\ref{CONJUNCTION}, with \emph{positive} probabilities for the \emph{conditioning} events, has been already proposed in  the context of betting scheme in \cite{mcgee89}.
 By linearity of prevision  it holds that  
 \[
 z=P(AHBK) + x P(\no{H}BK) + y P(AH\no{K})+zP(\no{H}\no{K})\,;
 \]
in particular,  if $P(H \vee K) > 0$ we obtain the following result given  in  \cite{Kauf09,mcgee89}: 
\[
\mathbb{P}[(A|H) \wedge (B|K)] = \frac{P(AHBK) + P(A|H) P(\no{H}BK) + P(B|K) P(AH\no{K})}{P(H \vee K)} \,.
\]
We recall that a well-known notion of conjunction among conditional events, which plays an important role in nonmonotonic reasoning, is the quasi conjunction  \cite{adams75,benferhat97,GiSa13IJAR}, i.e., the following conditional event:
\[
QC(A|H,B|K)= (AH\vee \no{H})\wedge(BK\vee \no{K})| (H \vee K)\,,
\]
or in numerical terms, since $AH\vee \no{H}=AH+ \no{H}$ and $BK\vee \no{K}=BK+ \no{K}$:
\[
QC(A|H,B|K)= \min \, \{AH+\no{H}, BK+\no{K}\} \,|\, (H \vee K)\,.
\]
The event $AH\vee \no{H}$ is the material conditional associated with the conditional ``if $H$ then $A$''. Then, the quasi conjunction is defined by taking the minimum of the  material conditionals given $H\vee K$.
However,  we define the conjunction by taking the minimum of the conditional events given $H\vee K$. Our conjunction is (in general) a conditional random quantity, whereas the quasi conjunction is a conditional event. 
In some particular cases conjunction and quasi conjunction coincide; 
two cases  examined in \cite{GiSa13a} are: $(i)$ $x=y=1$; $(ii)$  $K=AH$ (or symmetrically  $H=BK$).
Moreover,  classical results concerning lower and upper bounds for the conjunction of unconditional events, which do not hold for the upper bound of the  quasi conjunction (\cite{gilio12a,gilio13ins}),
still hold for our notion of conjunction. This is shown in the next result  (\cite{GiSa14}).

\begin{theorem}\label{THM:FRECHET}{\rm
 Given any coherent assessment $(x,y)$ on $\{A|H, B|K\}$, with $A,H,B,K$ logically independent, and with $H \neq \bot, K \neq \bot$, the extension $z = \mathbb{P}[(A|H) \wedge (B|K)]$ is coherent if and only if  the Fr\'echet-Hoeffding bounds are satisfied:
\begin{equation}\label{LOW-UPPER}
max\{x+y-1,0\} = z' \; \leq \; z \; \leq \; z'' = min\{x,y\} \,.
\end{equation}
}\end{theorem}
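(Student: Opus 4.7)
The plan is to apply the geometric characterization of coherence spelled out in \ref{Appendix}: a prevision assessment on a finite family of conditional random quantities with total conditioning event $\mathcal{H}$ is coherent if and only if it lies in the convex hull of the ``constituent points'', i.e.\ the vectors collecting the values that the conditional quantities take on each constituent contained in $\mathcal{H}$. I would apply this to the triple $\{A|H,\,B|K,\,(A|H)\wedge(B|K)\}$ with $\mathcal{H}=H\vee K$ and reduce the statement to a convex-geometric claim in $\mathbb{R}^{3}$.

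Since $A,B,H,K$ are logically independent, reading~(\ref{EQ:CONJUNCTION}) together with $A|H=AH+x\no{H}$ and $B|K=BK+y\no{K}$ across the constituents contained in $H\vee K$ gives the eight distinct points
\begin{equation*}
(1,1,1),\ (1,0,0),\ (0,1,0),\ (0,0,0),\ (1,y,y),\ (0,y,0),\ (x,1,x),\ (x,0,0)
\end{equation*}
for $(A|H,\,B|K,\,(A|H)\wedge(B|K))$. The structural observation that drives everything is that in each case the third coordinate equals the minimum of the first two.

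For the \emph{necessity} of the Fr\'echet--Hoeffding bounds, if $(x,y,z)$ is coherent then $(x,y,z)=\sum_{h}\lambda_{h}(q_{h1},q_{h2},q_{h3})$ for nonnegative weights $\lambda_{h}$ summing to $1$. Combining this with $q_{h3}=\min\{q_{h1},q_{h2}\}\leq q_{h1}$ (resp.\ $\leq q_{h2}$) yields $z\leq x$ and $z\leq y$, hence $z\leq\min\{x,y\}$, while $\min\{a,b\}\geq\max\{a+b-1,0\}$ for $a,b\in[0,1]$ yields $z\geq\max\{x+y-1,0\}$.

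For the \emph{sufficiency}, I would produce explicit convex combinations realizing the two extreme values of $z$ and then observe that the section of the convex hull by the vertical line of fixed $(x,y)$ is an interval. For the upper bound, assuming without loss of generality $x\leq y$,
\begin{equation*}
(x,y,x)=x\,(1,1,1)+(y-x)\,(0,1,0)+(1-y)\,(0,0,0);
\end{equation*}
for the lower bound one splits on the sign of $x+y-1$: if $x+y\leq 1$,
\begin{equation*}
(x,y,0)=x\,(1,0,0)+y\,(0,1,0)+(1-x-y)\,(0,0,0),
\end{equation*}
and if $x+y\geq 1$,
\begin{equation*}
(x,y,x+y-1)=(x+y-1)\,(1,1,1)+(1-y)\,(1,0,0)+(1-x)\,(0,1,0).
\end{equation*}
In each case the weights are nonnegative and sum to $1$. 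The only real subtlety is the case split required for the lower bound; once those convex combinations are in hand, convexity of the hull gives coherence of every $z\in[\max\{x+y-1,0\},\min\{x,y\}]$, completing the proof.
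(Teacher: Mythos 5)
The paper never proves Theorem~\ref{THM:FRECHET} in its own text (it is recalled from \cite{GiSa14}), so the natural benchmark is the coherence-checking machinery of \ref{Appendix}, which is exactly what you use. Your enumeration of the eight distinct constituent points for $(A|H,\,B|K,\,(A|H)\wedge(B|K))$ on the constituents contained in $H\vee K$ is correct, the observation that the third coordinate of each point is the minimum of the first two (hence $\geq$ their sum minus $1$ and $\geq 0$) gives necessity of (\ref{LOW-UPPER}), since membership of the assessment in the convex hull $\I$ is indeed necessary for coherence by condition (i) of Theorem~\ref{CNES-PREV-I_0-INT}; and your three convex combinations for $z''$ and $z'$ check out arithmetically, with the WLOG and the case split both legitimate.

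The gap is in the sufficiency direction: your opening claim that an assessment is coherent \emph{if and only if} it lies in the convex hull of the constituent points is not the criterion of the paper. By Theorem~\ref{CNES-PREV-I_0-INT}, solvability of system (\ref{SYST-SIGMA}) is necessary but not sufficient; one must additionally compute the set $I_0$ of (\ref{EQ:I0}) and, if $I_0\neq\emptyset$, verify coherence of the sub-assessment $\M_0$ on $\F_0$ --- this is precisely the extra step carried out in the paper's proofs of Theorems~\ref{THM:PIONITERATED}, \ref{THM:PI} and \ref{THM:BICONDITIONALLOWERUPPER}, and Theorem~\ref{THM:PIONITERATED} shows that in nearby situations (e.g.\ $x=0$ for the iterated conditional) the hull condition alone does not settle coherence. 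As written, your argument only shows that $(x,y,z)$ with $z\in[z',z'']$ solves the system. Fortunately the gap closes in one line: each of your three explicit solutions (and hence any convex combination of the endpoint solutions, which handles intermediate $z$) is supported on constituents implying $HK$, so $\sum_{h:C_h\subseteq H}\lambda_h=\sum_{h:C_h\subseteq K}\lambda_h=\sum_{h:C_h\subseteq H\vee K}\lambda_h=1>0$, whence $I_0=\emptyset$ and condition (ii) of Theorem~\ref{CNES-PREV-I_0-INT} is vacuous. With that observation added, your proof is complete and in the same spirit as the paper's treatment of the analogous results.
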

\begin{remark}\label{REM:ANDRULE1}
We recall  that, by logical independence of $A,H,B,K$, the assessment $(x,y)$ is coherent for every  $(x,y)\in [0,1]^2$. From Theorem \ref{THM:FRECHET}, the set $\Pi$ of all coherent assessment $(x,y,z)$ on $\F=\{A|H,B|K,(A|H)\wedge(B|K)\}$ is
$\Pi=\{(x,y,z): (x,y)\in[0,1]^2, \max\{x+y-1,0\}  \leq \; z \; \leq \; \min\{x,y\}\}$. Then,  $z\in[0,1]$ and  $(A|H)\wedge (B|K)\in[0,1]$. 
Moreover,  as $(1,1,1)\in \Pi$,  the family  $\F$ (and so each subfamily of $\F$) is p-consistent.
In particular,  if $x=1,y=1$, then  $z$ must be equal to 1.
Then, by  Definition~\ref{PE}, $\{A|H,B|K\}$ p-entails $(A|H) \wedge (B|K)$, i.e., 
\begin{equation}\label{EQ:ANDRULE}
    \{A|H,B|K\} \models_p (A|H) \wedge (B|K)\, .
\end{equation}  We call this inference rule ``\emph{AND} rule for conditional events''. 
We also notice that  the assessment $(x,y,1)\in \Pi$ if and only if $x=1$ and $y=1$. Then, 
both $x=1$ and $y=1$ follow from $z=1$, i.e.  $(A|H) \wedge (B|K) \models_p \{A|H,B|K\}$, which is the converse of  (\ref{EQ:ANDRULE}).
\end{remark}
\begin{remark}\label{REM:ANDRULE}
Assuming  $HK=\bot$, it holds that the conjunction  $(A|H)\wedge (B|K)$ coincides with the product $(A|H)\cdot (B|K)$; moreover
	\[
	\prev[(A|H)\wedge (B|K)]=\prev[(A|H)\cdot (B|K)]= P(A|H)P(B|K),
	\]
which states that the random quantities  $A|H$ and $B|K$ are \emph{uncorrelated}; more details are given in   (\cite{GiSa14}).
\end{remark}
\subsection{Iterated conditioning}
We recall and discuss  the notion of iterated conditioning.
\begin{definition}[Iterated conditioning]\label{ITER-COND}{\rm
Given any pair of conditional events $A|H$ and $B|K$, the iterated conditional $(B|K)|(A|H)$ is defined as the  conditional random quantity 
$(B|K)|(A|H) = (B|K) \wedge (A|H) + \mu \no A|H$,  where $\mu=\mathbb{P}[(B|K)|(A|H)]$.
}
\end{definition}
Notice that, in the context of betting scheme, $\mu$ represents the amount you agree to pay, with the proviso that you will receive the quantity 
	\begin{equation}
		\label{EQ:ITERATED}
		(B|K)|(A|H)=\left\{
		\begin{array}{l@{\quad }l}
			1, &\mbox{ if } AHBK \mbox{ is true,}
			\\
			0, &\mbox{ if } AH\widebar{B}K \mbox{ is true,}
			\\
			y, &\mbox{ if } AH\widebar{K} \mbox{ is true,}
			\\
			\mu , &\mbox{ if } \widebar{A}H\mbox{ is true,}
			\\
			x+\mu (1-x), &\mbox{ if } \widebar{H}BK \mbox{ is true,}
			\\
			\mu (1-x), &\mbox{ if } \widebar{H}\;\widebar{B}K \mbox{ is true,}
			\\
			z+\mu (1-x), &\mbox{ if } \widebar{H}\;\widebar{K} \mbox{ is true.}
			\\
		\end{array}
		\right.
	\end{equation}
%
We recall the following product formula (\cite{GiSa13a}).
\begin{theorem}[Product formula]\label{THM:PRODUCT}{\rm
Given any  assessment $x=P(A|H), \mu=\mathbb{P}[(B|K) | (A|H)]$, $z=\mathbb{P}[(B|K) \wedge (A|H)]$, if $(x,\mu,z)$ is coherent, then
 $z=\mu \cdot x$, i.e.,
 \begin{equation}\label{EQ:PRODUCTFORMULA}
 \mathbb{P}[(B|K) \wedge (A|H)] = \mathbb{P}[(B|K) | (A|H)]P(A|H) \,.
 \end{equation}
}
\end{theorem}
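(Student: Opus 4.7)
The plan is to read the theorem straight off the defining identity of the iterated conditional by taking previsions. By Definition~\ref{ITER-COND}, if one sets $\mu=\mathbb{P}[(B|K)|(A|H)]$, then the pointwise identity of random quantities
\begin{equation*}
(B|K)\,|\,(A|H) \;=\; (B|K) \wedge (A|H) \;+\; \mu\,(\no{A}|H)
\end{equation*}
holds, with $\mu$ on the right treated as a constant scalar fixed by the assessment.

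First I would take previsions on both sides. The left-hand side has prevision $\mu$ by the very choice of $\mu$; the conjunction on the right has prevision $z$ by hypothesis; and the last summand is the scalar $\mu$ times the conditional event $\no{A}|H$, whose probability is $P(\no{A}|H)=1-x$. Using coherence of $(x,\mu,z)$ to invoke linearity of prevision, these three observations combine into the scalar equation
\begin{equation*}
\mu \;=\; z \;+\; \mu\,(1-x),
\end{equation*}
and a one-step rearrangement yields $\mu\,x = z$, which is exactly the claimed formula~(\ref{EQ:PRODUCTFORMULA}).

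The one genuinely delicate step is the appeal to linearity of prevision: it is the coherence of the triple $(x,\mu,z)$ that legitimises passing the prevision through the sum and through the scalar factor $\mu$, rather than the right-hand side being assigned some other, separately coherent, value. A safer alternative that makes no use of linearity beyond finite additivity of $P$ over a partition is to verify the scalar equation constituent-by-constituent: one weights each of the seven cases appearing in the representation (\ref{EQ:ITERATED}) of $(B|K)|(A|H)$ by the probability of the corresponding constituent (using (\ref{EQ:REPRES}) for the conjunction part), and collects like terms to recover the same equation $\mu = z + \mu(1-x)$. Either route terminates the argument in a single line of algebra, so the real content of the theorem is the compatibility of Definition~\ref{ITER-COND} with the standard chain-rule factorisation of the conjunction.
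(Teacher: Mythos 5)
Your argument is correct: the paper itself states Theorem~\ref{THM:PRODUCT} without proof (recalling it from \cite{GiSa13a}), and your derivation---taking previsions in the defining identity $(B|K)|(A|H)=(B|K)\wedge(A|H)+\mu\,\no{A}|H$, using linearity of prevision (which coherence enforces, exactly as the paper itself does when computing $z$ and in Section~\ref{SEC:COUNTERFACTUALS}) together with $P(\no{A}|H)=1-x$, to get $\mu=z+\mu(1-x)$ and hence $z=\mu x$---is precisely the standard argument of the cited source. So your proposal is essentially the same approach as the paper's (outsourced) proof, and the constituent-by-constituent check you sketch is a sound, if not really more elementary, variant.
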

As  $z=\mu x$, it follows that $z+\mu(1-x)=\mu$. Then, from (\ref{EQ:ITERATED}), $(B|K)|(A|H)$ coincides with 
\[
AHBK+yAH\no{K}+(x+\mu(1-x))\,\no{H}BK+\mu(1-x)\,\no{H}\no{B}K+\mu\,(\no{A}H \vee 
\no{H}\no{K}).
\]
\begin{remark}
As $x\geq z$, for $x>0$  one has $\mu=\frac{z}{x}\in[0,1]$; moreover  $x+\mu(1-x)$ is a linear convex combination of the values $\mu$ and $1$, then  $x+\mu(1-x)\in[\mu,1]$. Therefore, for $x>0$, $(B|K)|(A|H)\in [0,1]$. As shown in Theorem~\ref{THM:CENTERINGLU}, $\mu\in[0,1]$ also for $x=0$. Thus,  $(B|K)|(A|H)\in[0,1]$ in all cases.
\end{remark}
\subsection{One-premise and two-premise centering: p-validity}
The \emph{one-premise centering} involving conditional events is represented by the following inference rule: \emph{from  $(A|H)\wedge (B|K)$ infer $(B|K)|(A|H)$}.
Likewise, \emph{two-premise centering} involving conditional events is represented by: \emph{from  $\{A|H, B|K\}$ infer $(B|K)|(A|H)$}.
Are these inference rules p-valid?

One-premise centering is p-valid;
indeed, from (\ref{EQ:PRODUCTFORMULA}) it holds that
\begin{equation}\label{EQ:INEQ}
\mathbb{P}[(B|K) \wedge (A|H)]\leq \mathbb{P}[(B|K) | (A|H)],
\end{equation}
then  $\mathbb{P}[(B|K) \wedge (A|H)]=1$ implies  $\mathbb{P}[(B|K) | (A|H)]=1$, i.e.,  
\begin{equation}\label{EQ:1CENTERING}
(B|K) \wedge (A|H)\,\, \models_p  \,\, (B|K) | (A|H).
\end{equation}

Two-premise centering is also  p-valid; 
indeed,  from (\ref{EQ:ANDRULE})  and 
(\ref{EQ:1CENTERING}), by transitivity, 
\begin{equation}\label{EQ:2CENTERING}
\{(A|H), (B|K)\}\,\, \models_p  \,\, (B|K) | (A|H),  
\end{equation}
that is, if $P(A|H)=1$ and $P(B|K)=1$, then  $\mathbb{P}[(B|K) | (A|H)]=1$. 
\section{Lower and upper bounds for two-premise centering}
\label{SEC:LUBounds2PremsiseCentering}
In this section we  give a probabilistic analysis of two-premise centering by determining the coherent lower and upper bounds for the conclusion.
We first consider  the general case:  \emph{from  $\{A|H, B|K\}$ infer $(B|K)|(A|H)$}. Then, we consider two-premise centering with unconditional events in the premise set: \emph{from  $\{A, B\}$ infer $B|A$}, which is a particular case where $H=K=\Omega$.
\subsection{The general case: ``from  $\{A|H, B|K\}$ infer $(B|K)|(A|H)$''}
We start by computing the set of all coherent assessments on the elements of centering.
\begin{theorem}\label{THM:PIONITERATED}
	Let $A,B,H,K$ be any logically independent events. 
	The set  $\Pi$ of  all coherent assessments $(x,y,z,\mu)$ on the family $\F=\{A|H,B|K$, $(A|H)\wedge (B|K), (B|K)|(A|H)\}$ is $\Pi=\Pi'\cup \Pi''$, where 
\begin{equation}\label{EQ:PI}
\begin{array}{ll}
\Pi'=\{(x,y,z,\mu): x\in (0,1], y\in[0,1], 
z\in [z', z''], \mu=\frac{z}{x}\},  \\
\text{with }  z'=\max\{x+y-1,0\}, z''= \min\{x,y\}, \text{ and}\\
\Pi''=\{(0,y,0,\mu): (y,\mu)\in[0,1]^2\}. 
\end{array}
\end{equation}
\end{theorem}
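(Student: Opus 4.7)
The plan is to combine the Fréchet--Hoeffding bounds of Theorem~\ref{THM:FRECHET} with the product formula of Theorem~\ref{THM:PRODUCT}, and then to split according to whether $x = 0$ or $x > 0$. By Theorem~\ref{THM:FRECHET} (together with Remark~\ref{REM:ANDRULE1}), the coherent assessments on the sub-family $\{A|H, B|K, (A|H)\wedge(B|K)\}$ are exactly the $(x,y,z)$ with $(x,y)\in[0,1]^2$ and $\max\{x+y-1,0\} \leq z \leq \min\{x,y\}$. So the projection of $\Pi$ on the first three coordinates is already pinned down; it only remains to determine, for each admissible $(x,y,z)$, the set of values of $\mu$ that yield a coherent extension.

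By Theorem~\ref{THM:PRODUCT}, any such coherent $\mu$ must satisfy $z = \mu x$. First I would treat the case $x > 0$: then $\mu$ is uniquely forced to be $z/x$, and since $0 \leq z \leq \min\{x,y\} \leq x$ we automatically get $\mu \in [0,1]$. Coherence of $(x,y,z,z/x)$ would then be verified by exhibiting a probability distribution on the constituents of $\F$ realizing this prevision vector: because $(A|H)\wedge(B|K)$ and $(B|K)|(A|H)$ are, via~(\ref{EQ:REPRES}) and~(\ref{EQ:ITERATED}), explicit random quantities in $A, B, H, K$ once $x, y, z, \mu$ are fixed, any distribution realizing $(x,y,z)$ from Theorem~\ref{THM:FRECHET} will automatically produce $\mu = z/x$ as the prevision of $(B|K)|(A|H)$ through the product formula. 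This yields $\Pi'$.

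For the case $x = 0$, Fréchet--Hoeffding forces $z = 0$, and the product formula $0 = \mu \cdot 0$ imposes no constraint on $\mu$, so I need to show that every $\mu \in [0,1]$ is actually attainable. The cleanest route is a continuity argument: fix $y \in [0,1]$ and $\mu \in [0,1]$; pick $x_n \downarrow 0$ with $x_n \leq y$ and set $z_n = \mu x_n$, so that $(x_n, y, z_n) $ lies in the Fréchet--Hoeffding region and $(x_n,y,z_n,\mu)$ is coherent by the previous case. Since the set of coherent assessments on a fixed finite family of (bounded) conditional random quantities is closed, passing to the limit yields coherence of $(0,y,0,\mu)$, which gives $\Pi''$.

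The main obstacle I expect is the $x=0$ case: the product formula does not uniquely determine $\mu$, and one must justify that the full interval $[0,1]$ is admissible, mirroring the classical fact that $P(B|A)$ is unconstrained when $P(A)=0$. Apart from the limiting argument sketched above, an alternative would be a direct no-Dutch-book check on the constituents of $\F$ using Definition~\ref{COER-BET}, verifying that for every choice of stakes $s_1,\ldots,s_4$ the restricted gain set $\mathcal{D}_{\H_4}$ attains both non-positive and non-negative values; this is more elementary but notationally heavier, so I would prefer the closure argument unless the reviewer requires the explicit computation.
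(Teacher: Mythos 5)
Your treatment of the case $x>0$ is essentially the paper's: coherence of $(x,y,z)$ comes from Theorem~\ref{THM:FRECHET}, a coherent extension to the iterated conditional exists and Theorem~\ref{THM:PRODUCT} forces $\mu=z/x$, so on that region the coherent set is exactly $\Pi'$ (your ``exhibit a distribution on the constituents'' phrasing is looser than needed, since the convex-hull condition alone is not sufficient for coherence of prevision assessments, but the existence-plus-uniqueness argument is sound). The genuine gap is the case $x=0$. Your argument hinges on the claim that the set of coherent assessments on a fixed finite family of bounded conditional random quantities is closed, but no such result is available here and you cannot simply cite one: $\F$ is not a family of conditional events, and both $(A|H)\wedge(B|K)$ and $(B|K)|(A|H)$ are conditional random quantities whose possible values depend on the assessed numbers $x,y,z,\mu$ themselves. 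Coherence for such objects is the betting-scheme notion of Definition~\ref{COER-RQ}, checked via the operative characterization of Theorem~\ref{CNES-PREV-I_0-INT}, and the standard compactness argument for conditional events does not transfer to this assessment-dependent setting; establishing the closedness you invoke would essentially require the very verification you are trying to avoid. The paper instead argues directly: for $x=0$ one has $A|H=AH$, hence $(B|K)|(A|H)=(BK+y\no{K})|AH$; the constituents and points of Table~\ref{TAB:TABLE} show the convex-hull condition holds with $I_0\subseteq\{4\}$, so Theorem~\ref{CNES-PREV-I_0-INT} reduces everything to the coherence of $\mu=\prev[(BK+y\no{K})|AH]$, and the gain analysis on $AH$ yields coherence exactly for $\mu\in[0,1]$. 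Note that this direct computation also delivers the necessity half for $x=0$ (incoherence of any $\mu\notin[0,1]$), which your proposal never addresses.

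Even granting a closedness lemma, your approximating sequence keeps $y$ fixed and fails at the corners: if $y=0$ and $\mu>0$, the Fr\'echet--Hoeffding bounds force $z_n=0$ and hence $\mu_n=0$ for every $x_n>0$; if $y=1$ and $\mu<1$, they force $z_n=x_n$ and hence $\mu_n=1$. Reaching those points of $\Pi''$ would require perturbing $y$ as well (e.g.\ $y_n=\mu x_n$, or $y_n=1-x_n$). Your fallback option, a direct no-Dutch-book check on the constituents of $\F$, is the viable route and is, in structured form, exactly what the paper carries out; if you pursue it, work through the convex-hull system together with the $I_0$ recursion of Theorem~\ref{CNES-PREV-I_0-INT} rather than attempting the raw gain condition for arbitrary stakes on the whole family.
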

\begin{proof}
We recall that the assessment $(x,y)$ on $\{A|H,B|K\}$ is coherent for every $(x,y)\in[0,1]^2$. By Theorem \ref{THM:FRECHET}, the assessment  $z=\mathbb{P}[(A|H)\wedge(B|K)]$ is a coherent extension of $(x,y)$ if and only if  $z\in[z',z'']$, where 
$z'=\max \{x+y-1,0\}$ and $z''=\min  \{x,y\}$. 
Moreover, assuming $x>0$, by Theorem \ref{THM:PRODUCT} it holds that  $\mu=\frac{z}{x}$. Then, 
every $(x,y,z,\mu)\in \Pi'$ is coherent, that is $\Pi'\subseteq \Pi$. Of course, if $x>0$ and $(x,y,z,\mu)\notin \Pi'$, then $(x,y,z,\mu)$ is not coherent. 
Now, we assume  $x=0$, so that  $z'=z''=0$. Then,  we show that the assessment $(0,y,0,\mu)$ is coherent if and only if $(y,\mu)\in[0,1]^2$, that is $(0,y,0,\mu)\in \Pi''$. 
As $x=0$, it holds that  $A|H=AH+x\no{H}=AH$. Then, $(B|K)|(A|H)=(B|K)|AH=(BK+y\no{K})|AH$ and $\F=\{A|H,B|K,(A|H)\wedge (B|K), (BK+y\no{K})|AH\}$.
 The constituents $C_h$'s and  the  points $Q_h$'s associated with $(\F,\M)$, where  $\M=(0,y,0,\mu)$, are given in Table \ref{TAB:TABLE}.
\begin{table}[h]
	\centering
\begin{tabular}{|L|L|L|L|L}
	\hline
	    & C_h            & Q_h                          &  \\
	\hline
	C_1 & AHBK           & (1,1,1,1)                    & Q_1   \\
	C_2 & AH\no{B}K      & (1,0,0,0)                    & Q_2   \\
	C_3 & AH\no{K}       & (1,y,y,y)                    & Q_3   \\
	C_4 & \no{A}HBK      & (0,1,0,\mu)                    & Q_4   \\
	C_5 & \no{A}H\no{B}K & (0,0,0,\mu)                    & Q_5   \\
	C_6 & \no{A}H\no{K}  & (0,y,0,\mu)                  & Q_6   \\
	C_7 & \no{H}BK       & (0,1,0,\mu)                    & Q_7   \\
	C_8 & \no{H}\no{B}K  & (0,0,0,\mu)                    & Q_8   \\
	C_0 & \no{H}\no{K}   &(0,y,0,\mu)                       & Q_0=\mathcal{M}   \\
	\hline
\end{tabular}
	\caption{Constituents $C_h$'s and  points $Q_h$'s associated with  the prevision  assessment    $\mathcal{M}=(0,y,0,\mu)$  on 
		$\F=\{A|H,B|K,(A|H)\wedge (B|K), (BK+y\no{K})|AH\}$.
	}
	\label{TAB:TABLE}
\end{table}
Denoting by  $\mathcal{I}$ be the convex hull generated by  $Q_1,Q_2, \ldots,Q_{8}$, the coherence of the prevision assessment $\mathcal{M}$ on $\F$ requires that the condition $\P\in \mathcal{I}$ be satisfied; this amounts to the solvability of the following system 
\begin{equation}\label{eqn:sigma}
\begin{array}{l}
 \hspace{1 cm}
\M=\sum_{h=1}^{8} \lambda_hQ_h,\;\;\;
\sum_{h=1}^{8} \lambda_h=1,\;\;\; \lambda_h\geq 0,\,  \; h=1,\ldots,8 \,.
\end{array}
\end{equation}
		As $\mathcal{M}=yQ_{4}+(1-y)Q_{5}$, the vector $(\lambda_{1},\ldots ,
		\lambda_{8})=(0,0,0,y,1-y,0,0,0)$ is a solution of
		system~(\ref{eqn:sigma}) such that $\sum_{h:C_{h}\subseteq HK}\lambda
		_{h}=\lambda_1+\lambda_2+\lambda_4+\lambda_5=\lambda_4+\lambda_5=1>0$, so that $\sum_{h:C_{h}\subseteq H}\lambda_{h}=\lambda_1+\cdots+\lambda_6=1>0$ and
		$\sum_{h:C_{h}\subseteq K}\lambda_{h}=\lambda_1+\lambda_2+\lambda_4+\lambda_5+\lambda_7+\lambda_8=1>0$; while, $
		\sum_{h:C_{h}\subseteq AH}\lambda_{h}=\lambda_1+\lambda_2+\lambda_3=0$. Then, by (\ref{EQ:I0}),
		$\mathcal{I}_{0}\subseteq \{4\}$ and $\mathcal{F}_{0}\subseteq \{(BK+y
		\widebar{K})|AH\}$. Thus, from Theorem~\ref{CNES-PREV-I_0-INT}, for
		checking coherence of $\mathcal{M}$ on $\mathcal{F}$ it is sufficient
		to study the coherence of $\mu =\mathbb{P}(\{(BK+y\widebar{K})|AH\})$.
		The random gain for the assessment $\mu $ is
\[
G=sAH(BK+y\no{K}-\mu),\;\; s\in \mathbb{R}\,.
\]
Without loss of generality, we can assume $s=1$.
The constituents contained in $AH$ are: 
$C_1=AHBK, C_2=AH\no{B}K, C_3=AH\no{K}$. The corresponding values for the random gain $G$ are: 
$g_1=(1-\mu),\; g_2=-\mu,\; g_3=y-\mu$. Then, the set of  values of $G$ restricted to $AH$ is $\mathcal{D}_{AH}=\{g_1,g_2,g_3\}$.
As it can be verified, 
\[
\begin{array}{ll}
\min\mathcal{D}_{AH}>0 \;\Longleftrightarrow \; \mu <0,\;\;
\max\mathcal{D}_{AH}<0 \;\Longleftrightarrow \; \mu >1\,.
\end{array}
\]
Therefore, the  condition of coherence on $\mu$, that is $\min \mathcal{D}_{AH}\cdot \max \mathcal{D}_{AH}\leq 0$, is satisfied if and only if $\mu\in[0,1]$.
Thus, every assessment $(0,y,0,\mu)$ is coherent if and only if $(0,y,0,\mu)\in \Pi''=\{(0,y,0,\mu): (y,\mu)\in[0,1]^2\}$. Therefore $\Pi=\Pi'\cup \Pi''$.
\end{proof}
Based on Theorem~\ref{THM:PIONITERATED}, we obtain the following prevision propagation rule for two-premise centering: 
\begin{theorem}\label{THM:CENTERINGLU}
	Let $A,B,H,K$ be any logically independent events. Given a coherent assessment $(x,y)$ on $\{A|H,B|K\}$, for the iterated conditional $(B|K)|(A|H)$ the  extension $\mu=\prev((B|K)|(A|H))$ is coherent if and only if
	$\mu \in [\mu', \mu'']$, where 
	\begin{equation}\label{EQ:CENTERINGLU}
	\begin{array}{ll}
	\mu'=\left\{
	\begin{array}{ll}
	\max\left\{0,\tfrac{x+y-1}{x}\right\},& \text{ if } x>0;\\
	0,& \text{ if } x=0;\\
	\end{array}
	\right.\;\;
	\mu''=\left\{
	\begin{array}{ll}
	\min\left\{1,\tfrac yx\right\},& \text{ if } x>0;\\
	1,& \text{ if } x=0.\\
	\end{array}
	\right.
	\end{array}
	\end{equation} 
\end{theorem}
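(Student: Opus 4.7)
The plan is to derive this statement as an essentially immediate corollary of Theorem~\ref{THM:PIONITERATED}, which already characterizes the full set $\Pi = \Pi' \cup \Pi''$ of coherent assessments $(x,y,z,\mu)$ on the family $\F = \{A|H, B|K, (A|H)\wedge(B|K), (B|K)|(A|H)\}$. First I would fix a coherent assessment $(x,y)$ on $\{A|H, B|K\}$ and note that the coherent extensions $\mu = \prev((B|K)|(A|H))$ are precisely those values of $\mu$ for which there exists some $z$ such that $(x,y,z,\mu) \in \Pi$. The task therefore reduces to projecting $\Pi$ onto the $(x,y,\mu)$-coordinates while keeping $(x,y)$ fixed.

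Next I would split into two cases according to whether $x>0$ or $x=0$. When $x>0$, only $\Pi'$ contributes: the identity $\mu = z/x$ forces $\mu$ to inherit its range from the Fr\'echet--Hoeffding interval $[z',z''] = [\max\{x+y-1,0\}, \min\{x,y\}]$ allowed for $z$ by Theorem~\ref{THM:FRECHET}. Dividing through by the positive quantity $x$ gives
\[
\mu \in \left[\tfrac{\max\{x+y-1,0\}}{x},\; \tfrac{\min\{x,y\}}{x}\right],
\]
which simplifies term-by-term to $\bigl[\max\{0,(x+y-1)/x\},\, \min\{1,y/x\}\bigr]$, matching the first branch of (\ref{EQ:CENTERINGLU}). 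When $x = 0$, only $\Pi''$ contributes, and it directly yields $\mu \in [0,1]$ with $y$ arbitrary, giving the second branch.

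Combining the two cases recovers the piecewise formulas for $\mu'$ and $\mu''$. The only real bookkeeping obstacle is verifying that the two simplifications of the quotients $\max\{x+y-1,0\}/x$ and $\min\{x,y\}/x$ are carried out correctly (checking both sub-cases in each $\max$/$\min$), and confirming that the projection preserves achievability of each endpoint, which follows because both $z=z'$ and $z=z''$ are attained inside $\Pi'$. Since Theorem~\ref{THM:PIONITERATED} already does the heavy lifting (in particular the detailed constituent/random-gain analysis needed to justify $\Pi''$), no further coherence computation is required here.
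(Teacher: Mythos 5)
Your proposal is correct and follows essentially the same route as the paper's own proof: both arguments read off the answer from Theorem~\ref{THM:PIONITERATED}, treating the cases $x>0$ (where $\mu=z/x$ turns the Fr\'echet--Hoeffding interval $[\max\{x+y-1,0\},\min\{x,y\}]$ for $z$ into $[\max\{0,(x+y-1)/x\},\min\{1,y/x\}]$ for $\mu$) and $x=0$ (where $\Pi''$ gives $\mu\in[0,1]$) separately. No substantive difference.
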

\begin{proof}
Assume that  $x=0$. From Theorem \ref{THM:PIONITERATED} it follows that the set of all coherent assessments $(x,y,z,\mu)$ on $\F=\{A|H,B|K$, $(A|H)\wedge (B|K), (B|K)|(A|H)\}$ is  $\Pi''=\{(0,y,0,\mu): (y,\mu)\in[0,1]^2\}$. Then, 
$\mu$ is a coherent extension of $(x,y)$ if and only if $\mu \in[\mu',\mu'']$, where $\mu'=0$ and $\mu''=1$.

Assume that $x>0$. From Theorem \ref{THM:PIONITERATED} 
it follows that
		the set of all coherent assessments $(x,y,z,\mu )$ on $\mathcal{F}$ is
		$\Pi '=\{(x,y,z,\mu ): 0<x\leq 1, 0\leq y\leq 1, z' \leq z\leq z'',
		\mu =\frac{z}{x}\} $, where $z'=\max \{x+y-1,0\}$ and $z''=\min \{x,y
		\}$. Then, $\mu $ is a coherent extension of $(x,y)$ if and only if
		$\mu \in [\mu ', \mu '']$, where $\mu '=\frac{z'}{x}=\max \left\{ 
		\tfrac{x+y-1}{x},0\right\} $ and $\mu ''=\frac{z''}{x}=\min \left\{
		\tfrac{y}{x},1\right\} $.
\end{proof}
\begin{remark}
The p-validity of  two-premise centering given in (\ref{EQ:2CENTERING}) directly follows as an instantiation of 
 Theorem~\ref{THM:CENTERINGLU} with  $x=1$ and $y=1$. 
\end{remark}
\subsection{The case $H=K=\Omega$}
In case of logical dependencies among events, as we know,  the set of all coherent assessments may be smaller than the set given  in Theorem~\ref{THM:PIONITERATED}.
We examine the case $H=K=\Omega$, by showing that the set $\Pi$  of all coherent assessments  on $\F=\{A,B, AB, B|A\}$ is still the same as in  Theorem~\ref{THM:PIONITERATED}.
\begin{theorem}\label{THM:PI}
	Let $A,B$ be any logically independent events. 
	The set  $\Pi$ of  all coherent assessments $(x,y,z,\mu)$ on the family $\F=\{A,B, AB, B|A\}$ is $\Pi=\Pi'\cup \Pi''$, with $\Pi'$ and $\Pi''$ as defined in formula (\ref{EQ:PI}).
\end{theorem}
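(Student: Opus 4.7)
The plan is to mirror the proof of Theorem~\ref{THM:PIONITERATED}, being careful that setting $H=K=\Omega$ introduces logical dependencies that could \emph{in principle} shrink the set of coherent assessments, and to verify that in fact it does not.

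First I would observe that with $H=K=\Omega$ the family $\F$ becomes the specialization of the family in Theorem~\ref{THM:PIONITERATED}: by Remark~\ref{REM:BAYES} we have $(A|\Omega)\wedge(B|\Omega)=AB$, and $(B|\Omega)|(A|\Omega)=B|A$. The only constituents now are $AB,\;A\no{B},\;\no{A}B,\;\no{A}\no{B}$; the constituents involving $\no{H}$ or $\no{K}$ in Table~\ref{TAB:TABLE} all vanish. Thus the coherence analysis is structurally simpler than in Theorem~\ref{THM:PIONITERATED}, but the main task is to show the same set $\Pi'\cup\Pi''$ emerges.

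Next I would handle the two inclusions.  For $\Pi\subseteq \Pi'\cup\Pi''$: given any coherent $(x,y,z,\mu)$, coherence of $(x,y)$ on $\{A,B\}$ forces $(x,y)\in[0,1]^2$, and the classical Fréchet--Hoeffding inequalities for an assessment on $\{A,B,AB\}$ give $\max\{x+y-1,0\}\le z\le\min\{x,y\}$. If $x>0$, Theorem~\ref{THM:PRODUCT} (applied with the understanding that $(B|\Omega)\wedge(A|\Omega)=AB$) forces $\mu=z/x$, placing $(x,y,z,\mu)$ in $\Pi'$. If $x=0$, then the upper Fréchet bound $\min\{x,y\}=0$ forces $z=0$, so $(x,y,z,\mu)=(0,y,0,\mu)$, which lies in $\Pi''$ provided we can show $\mu\in[0,1]$ (addressed next).

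For the reverse inclusion $\Pi'\cup\Pi''\subseteq \Pi$: every $(x,y,z)$ in the Fréchet region is coherent on $\{A,B,AB\}$ by standard results, and by Theorem~\ref{THM:PRODUCT} the unique coherent extension to $B|A$ when $x>0$ is $\mu=z/x$, so $\Pi'\subseteq\Pi$. The delicate case is $\Pi''$, where $x=0$, $z=0$, and $\mu\in[0,1]$ is claimed arbitrary. I would repeat the constituent/Dutch-book argument of Theorem~\ref{THM:PIONITERATED} adapted to the four remaining constituents: write $\mathcal{M}=(0,y,0,\mu)=yQ_1'+(1-y)Q_2'$ for appropriate vertices $Q_h'$ associated with constituents on which $A$ is false (so that the probability mass is supported off $A$), verify that the solvability condition (\ref{eqn:sigma}) holds with $\mathcal{I}_0\subseteq\{B|A\}$, and then reduce via Theorem~\ref{CNES-PREV-I_0-INT} to the gain $G=sA(B+y\no{\Omega}-\mu)=sA(B-\mu)$ restricted to $A$, whose values $\{1-\mu,-\mu\}$ satisfy $\min\mathcal{D}_A\cdot\max\mathcal{D}_A\le 0$ iff $\mu\in[0,1]$.

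The main obstacle will be bookkeeping the degenerate constituents produced by $H=K=\Omega$ and confirming that the Dutch-book criterion on the smaller partition still yields exactly $\mu\in[0,1]$ when $x=0$; once this is checked, the decomposition $\Pi=\Pi'\cup\Pi''$ follows by combining the two regions.
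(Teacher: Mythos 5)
Your proposal is correct and follows essentially the same route as the paper's proof: Fr\'echet--Hoeffding bounds for $z=P(AB)$ plus the compound probability (product) formula $\mu=z/x$ for $x>0$, and for $x=0$ the constituent/convex-hull analysis with the mass placed on the constituents where $A$ is false, followed by the reduction via Theorem~\ref{CNES-PREV-I_0-INT} to checking that $\mu=P(B|A)$ is coherent iff $\mu\in[0,1]$. The only cosmetic difference is that you spell out the final Dutch-book gain computation on $A$, which the paper states as immediate.
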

\begin{proof}
We recall that the assessment $(x,y)$ on $\{A,B\}$ is coherent for every $(x,y)\in[0,1]^2$.  The assessment  $z=P(AB)$ is a coherent extension of $(x,y)$ if and only if  $z\in[z',z'']$, where 
$z'=\max \{x+y-1,0\}$ and $z''=\min  \{x,y\}$. 
Moreover, assuming $x>0$, by compound probability theorem  it holds that  $\mu=\frac{z}{x}$. Then, 
every $(x,y,z,\mu)\in \Pi'$ is coherent, that is $\Pi'\subseteq \Pi$. Of course, if $x>0$ and $(x,y,z,\mu)\notin \Pi'$, then $(x,y,z,\mu)$ is not coherent.  Now, we assume  $x=0$, so that  $z'=z''=0$. Then,  we show that the assessment $(0,y,0,\mu)$ is coherent if and only if $(y,\mu)\in[0,1]^2$, that is $(0,y,0,\mu)\in \Pi''$. 
 The constituents $C_h$'s and  the  points $Q_h$'s associated with $(\F,\mathcal{P})$, where  $\mathcal{P}=(0,y,0,\mu)$, are given in Table \ref{TAB:TABLENOIT}.
\begin{table}[h]
	\centering
\begin{tabular}{|L|L|L|L|L}
	\hline
	    & C_h            & Q_h                          &  \\
	\hline
	C_1 & AB           & (1,1,1,1)                    & Q_1   \\
	C_2 & A\no{B}      & (1,0,0,0)                    & Q_2   \\
	C_3 & \no{A}B      & (0,1,0,\mu)                    & Q_3   \\
	C_4 & \no{A}\no{B} & (0,0,0,\mu)                        & Q_4   \\
	\hline
\end{tabular}
	\caption{Constituents $C_h$'s and  points $Q_h$'s associated with  the probability  assessment    $\mathcal{P}=(0,y,0,\mu)$  on 
		$\F=\{A,B,AB, B|A\}$.
	}
	\label{TAB:TABLENOIT}
\end{table}
Denoting by  $\mathcal{I}$ the convex hull generated by  $Q_1,Q_2,Q_3,Q_4$, the coherence of the prevision assessment $\mathcal{P}$ on $\F$ requires that the condition $\P\in \mathcal{I}$ be satisfied; this amounts to the solvability of the following system 
\begin{equation}\label{eqn:sigma2}
\begin{array}{l}
\mathcal{P}=\sum_{h=1}^{4} \lambda_hQ_h,\;\;\;
\sum_{h=1}^{4} \lambda_h=1,\;\;\; \lambda_h\geq 0,\,  \; h=1,\ldots,4 \,.
\end{array}
\end{equation}
As $\mathcal{P}=yQ_3+(1-y)Q_4$, the vector $(\lambda_1,\ldots,\lambda_4)=(0,0, y,1-y)$ is a solution of system (\ref{eqn:sigma2}), with
 $\sum_{h:C_h\subseteq A}\lambda_h=0$. Then, by (\ref{EQ:I0}), $\I_0\subseteq\{4\}$ and $\F_0\subseteq \{B|A\}$. Thus, from  Theorem~\ref{CNES-PREV-I_0-INT},
for checking coherence of $\mathcal{P}$ on $\F$ it is sufficient to study the coherence of $\mu=P(B|A)$.  Of course, $\mu=P(B|A)$ is coherent if and only if $\mu\in[0,1]$.
Thus, every assessment $(0,y,0,\mu)$ is coherent if and only if $(0,y,0,\mu)\in \Pi''=\{(0,y,0,\mu): (y,\mu)\in[0,1]^2\}$. Therefore $\Pi=\Pi'\cup \Pi''$.
\end{proof}

Based on Theorem~\ref{THM:PI}, we obtain the following prevision propagation rule for two-premise centering with unconditional events in the premise set: 
\begin{theorem}\label{THM:BASICCENTERINGLU}
	Let $A,B$ be any logically independent events. Given a coherent assessment $(x,y)$ on $\{A,B\}$, for the conditional event $B|A$ the  extension $\mu=P(B|A)$ is coherent if and only if
	$\mu \in [\mu', \mu'']$, where \[
	\begin{array}{ll}
	\mu'=\left\{
	\begin{array}{ll}
	\max\left\{\tfrac{x+y-1}{x},0\right\},& \text{ if } x>0;\\
	0,& \text{ if } x=0;\\
	\end{array}
	\right.\;\;
	\mu''=\left\{
	\begin{array}{ll}
	\min\left\{\tfrac yx,1\right\},& \text{ if } x>0;\\
	1,& \text{ if } x=0.\\
	\end{array}
	\right.
	\end{array}
	\]
\end{theorem}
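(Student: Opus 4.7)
The plan is to derive Theorem~\ref{THM:BASICCENTERINGLU} as a direct consequence of Theorem~\ref{THM:PI}, which already characterizes the full set $\Pi = \Pi' \cup \Pi''$ of coherent assessments $(x,y,z,\mu)$ on the family $\F = \{A, B, AB, B|A\}$. The coherent extensions $\mu$ of a given coherent pair $(x,y)$ on $\{A,B\}$ are obtained by projecting $\Pi$ onto its last coordinate after fixing the first two coordinates.

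First, I would split the argument on whether $x=0$ or $x>0$. If $x=0$, then by Theorem~\ref{THM:PI} the relevant component of $\Pi$ is $\Pi''=\{(0,y,0,\mu):(y,\mu)\in[0,1]^2\}$, so every $\mu\in[0,1]$ is a coherent extension; this yields $\mu'=0$ and $\mu''=1$ as claimed. If $x>0$, then only $\Pi'$ contributes, so $\mu$ is a coherent extension of $(x,y)$ if and only if there exists $z\in[z',z'']$ with $\mu = z/x$, where $z'=\max\{x+y-1,0\}$ and $z''=\min\{x,y\}$. Since the map $z\mapsto z/x$ is an increasing bijection on $\mathbb{R}$ for $x>0$, the set of admissible $\mu$ is the closed interval $[z'/x,\,z''/x]$, i.e.\ $\mu\in\bigl[\max\{(x+y-1)/x,0\},\,\min\{y/x,1\}\bigr]$.

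Putting the two cases together gives exactly the piecewise formulas in the statement. There is no real obstacle here: the combinatorial and geometric work (identifying the constituents, solving the convex-hull system, invoking Theorem~\ref{CNES-PREV-I_0-INT} to reduce to the coherence of $\mu=P(B|A)$ in the degenerate case $x=0$) has already been carried out in the proof of Theorem~\ref{THM:PI}. One could alternatively give a self-contained derivation from the Fréchet--Hoeffding bounds on $z=P(AB)$ combined with the compound probability theorem $z=\mu x$, but the cleaner route is simply to invoke Theorem~\ref{THM:PI} and carry out the final projection onto the $\mu$-coordinate.
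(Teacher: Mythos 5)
Your proposal is correct and follows essentially the same route as the paper: the paper's proof of Theorem~\ref{THM:BASICCENTERINGLU} simply says it is the same as that of Theorem~\ref{THM:CENTERINGLU} with Theorem~\ref{THM:PI} in place of Theorem~\ref{THM:PIONITERATED}, which is exactly your case split on $x=0$ (using $\Pi''$) versus $x>0$ (using $\Pi'$ and dividing the bounds $z'\leq z\leq z''$ by $x$).
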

\begin{proof}
The proof is the same of Theorem \ref{THM:CENTERINGLU}, 
with $\F=\{A,B,AB,B|A\}$ and 
with Theorem~\ref{THM:PIONITERATED}  replaced by Theorem~\ref{THM:PI}.
\end{proof}
\begin{remark}
As shown by theorems  \ref{THM:CENTERINGLU} and \ref{THM:BASICCENTERINGLU}, 
 the  lower and upper bounds on the conclusion of   two-premise centering involving iterated conditionals coincide with the respective bounds on the conclusion of  the (non-iterated) two-premise centering. 
\end{remark}
\section{Biconditional centering}
\label{SEC:BICOND}
In classical logic the biconditional $A\leftrightarrow B$ (defined by $\no{(A \vee B)} \vee (AB)$)
can be represented by the conjunction of the two material conditionals $\no A\vee B$ and $\no B\vee A$.
Therefore, $\{\no A\vee B,\no B\vee A\}\models A\leftrightarrow B$\footnote{
We recall that the symbol $\models$ denotes the relation of logical entailment. In our case, if both events  $\no A\vee B$ and  $\no B\vee A$ are true, then the biconditional $A\leftrightarrow B$ is true.
}, which is called \emph{biconditional introduction rule}. 
With the material conditional interpretation of a conditional, the biconditional  $A \leftrightarrow B$ represents the conjunction of the two conditionals $\textit{if } A \textit{ then }
B$ and $\textit{if } B \textit{ then }
A$. 
In this section, we present an analogue in terms of conditional events, by also giving a meaning 
 to the conjunction of two conditional events $A|B$ and $B|A$. 

From centering it follows that $\{A,B\} \models_p B|A$ and $\{A,B\} \models_p A|B$. 
Then, from $P(A)=P(B)=1$ it follows that $P(B|A)=P(A|B)=1$, which we denote by: $\{A,B\} \models_p \{A|B, B|A\}$.
Thus, by applying~(\ref{EQ:ANDRULE}) with $H=B$ and $K=A$, we obtain $\{A|B,B|A\} \models_p (A|B) \wedge (B|A)$ (which we call \emph{biconditional introduction} rule, or \emph{biconditional AND} rule). Then,  by transitivity
\begin{equation}\label{EQ:ANDRULEBICONDITIONAL}
\{A,B\}\; \models_p\; (A|B) \wedge (B|A)\,.
\end{equation}
In a similar way, we can prove that
\begin{equation}\label{EQ:ANDRULEBICONDITIONAL2}
AB\; \models_p\; (A|B) \wedge (B|A)\,.
\end{equation}
We recall that the conditional event  $(AB) \,|\, (A \vee B)$, denoted by $A||B$, captures the notion of the \emph{biconditional event}, which has been seen as the conjunction of two conditionals with the same truth table as the ``defective'' biconditional discussed in \cite{gauffroy09}; see also \cite{fugard11a}.
 We have
\begin{theorem}\label{THM:BIC}
Given two events $A$ and $B$ it holds that: $(A|B)\wedge (B|A)=(AB)|(A\vee B)=A||B$.
\end{theorem}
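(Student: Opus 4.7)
The plan is to apply Definition~\ref{CONJUNCTION} with the substitution $H = B$ and $K = A$, and to exploit the logical collapses that this symmetry forces. First I would fix the notation $x = P(A|B)$, $y = P(B|A)$, and $z = \prev[(A|B) \wedge (B|A)]$, and inspect the five cases that appear in (\ref{EQ:CONJUNCTION}). The two ``partial information'' constituents collapse immediately: $\no{H}BK = \no{B}\cdot AB = \bot$ and $AH\no{K} = AB\cdot \no{A} = \bot$, so the values $x$ and $y$ never actually occur. What remains is the three-valued random quantity
\begin{equation*}
(A|B) \wedge (B|A) \;=\; 1\cdot AB \;+\; 0\cdot (A\no{B} \vee \no{A}B) \;+\; z\cdot \no{A}\,\no{B},
\end{equation*}
where $\{AB,\, A\no{B} \vee \no{A}B,\, \no{A}\,\no{B}\}$ is a partition of $\Omega$.

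Next I would compare this expression with the conditional event $(AB)|(A \vee B)$. Writing $w = P((AB)|(A \vee B))$, this conditional event takes the value $1$ on $(AB)(A \vee B) = AB$, the value $0$ on the complementary event inside $A \vee B$, namely $A\no{B} \vee \no{A}B$, and the value $w$ on $\no{A}\,\no{B}$. Hence the two sides of the claimed identity agree term by term on the partition, modulo the identification of $z$ with $w$.

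To close the argument I would verify $z = w$. Taking the prevision of the displayed identity and using linearity yields $z = P(AB) + z\,P(\no{A}\,\no{B})$, which rearranges to $z\cdot P(A \vee B) = P(AB)$; the compound probability theorem gives the identical relation $w\cdot P(A \vee B) = P(AB)$. When $P(A \vee B) > 0$ this forces $z = w$, and when $P(A \vee B) = 0$ both random quantities collapse to a single free constant on $\no{A}\,\no{B}$ and coherence pins the two values together. The equality $A||B = (AB)|(A \vee B)$ then holds by the definition given just before the theorem statement. The main substantive step is the verification of the two logical collapses in (\ref{EQ:CONJUNCTION}); once these are done, the rest is routine bookkeeping with the prevision.
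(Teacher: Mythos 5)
Your first step coincides with the paper's: applying Definition~\ref{CONJUNCTION} with $H=B$, $K=A$, the two constituents that would carry the values $x$ and $y$ are impossible ($\no{B}\,B\,A=\bot$ and $A\,B\,\no{A}=\bot$), so $(A|B)\wedge(B|A)$ collapses to $AB+z\,\no{A}\,\no{B}$, while $(AB)|(A\vee B)=AB+w\,\no{A}\,\no{B}$ has the same shape; it remains to show $z=w$. For that identification you take a different route from the paper: your linearity computation $z=P(AB)+z\,P(\no{A}\,\no{B})$, i.e.\ $z\,P(A\vee B)=P(AB)$, matched against the compound probability theorem $w\,P(A\vee B)=P(AB)$, is a perfectly legitimate alternative \emph{when} $P(A\vee B)>0$.

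The gap is the case $P(A\vee B)=0$, which you settle with the sentence ``coherence pins the two values together''. That sentence \emph{is} the claim to be proved, not an argument, and it is exactly the delicate point of the theorem: in this framework conditioning events may have probability zero (indeed $P(A\vee B)$ need not even be assessed), so an argument that divides by $P(A\vee B)$ does not establish the identity of the two random quantities in general. The paper closes this uniformly, with no case split, by invoking \cite[Theorem~4]{GiSa14}: two conditional random quantities that coincide whenever the disjunction of their conditioning events is true must, by coherence, have equal previsions, and hence coincide as random quantities. If you prefer a self-contained argument, a direct Dutch-book computation does it: bet $+1$ on $(A|B)\wedge(B|A)$ and $-1$ on $(AB)|(A\vee B)$; both bets are called off on $\no{A}\,\no{B}$, and on every constituent of $A\vee B$ the net gain is the constant $w-z$, so the coherence condition $\min\mathcal{D}_{A\vee B}\leq 0\leq\max\mathcal{D}_{A\vee B}$ forces $w=z$, independently of any value of $P(A\vee B)$. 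With that step supplied (or the citation), your proposal becomes a complete proof; the remaining identification $A||B=(AB)|(A\vee B)$ is indeed just the definition.
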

\begin{proof}
We note that $
(A|B)\wedge (B|A)=\min(A|B,B|A)|(A\vee B)=AB+\mu\cdot \no{A}\no{B}$,
 where $\mu=\mathbb{P}[(A|B)\wedge (B|A)]$; we also observe  that $(AB)|(A\vee B)=AB+p\cdot \no{A} \,\no{B}$, where $p=P[(AB)|(A\vee B)]$. Then,  under the assumption that ``$(A\vee B)$ is true'', the two random quantities $(A|B)\wedge (B|A)$ and $(AB)|(A\vee B)$ coincide. 
By coherence (see \cite[Theorem~4]{GiSa14}) it follows that these two random quantities  coincide also under the assumption  that ``$(A\vee B)$  is false'', that is
$\mu$ and $p$  coincide.  Therefore,  $(A|B)\wedge (B|A)=(AB)|(A\vee B)$. 
\end{proof}
Based on Theorem \ref{THM:BIC}, we can now really interpret the biconditional event $A||B$ as the conjunction of the two conditionals $(B | A)$ and   $(A | B)$.  Moreover, equations (\ref{EQ:ANDRULEBICONDITIONAL}) and (\ref{EQ:ANDRULEBICONDITIONAL2}) represent what we call \emph{two-premise biconditional centering} and \emph{one-premise biconditional centering} respectively, that is 
$\{A,B\} \models_p A||B$ and $AB \models_p A||B$. 

Though in classical logic $\{\no A, \no B\} \models (A\leftrightarrow B)$, the  analogue  does not hold in our approach,
since we do not have p-entailment of $A||B$ from $\no A, \no B$, indeed  if $P(\no A)=P(\no B)=1$, then $P(A\vee B)=0$ and therefore $P(A||B)=P((AB)|(A\vee B))\in[0,1]$ (see Theorem \ref{THM:BICONDITIONALLOWERUPPER} below).
The biconditional event $A||B$ is of interest to psychologists
because there is evidence that children go through a developmental stage in which they judge that $P(\textit{if } A \textit{ then }
B) = P[(AB )|(A \vee B)]$, with this judgment being replaced by \textit{P(if A then B)} = $P(B
| A)$ as they grow older (\cite{gauffroy09}).
We recall that, given two conditional events $A|H$ and $B|K$, their quasi conjunction is defined as the conditional event $Q(A|H,B|K)=[(AH\vee\no{H})\wedge (BK\vee \no{K})]|(H\vee K)$.
Quasi conjunction is a  basic notion in the work of Adams (\cite{adams75}) 
and plays a role in characterizing entailment from a conditional knowledge base (see also \cite{benferhat97}).
We recall that in \cite{gilio13ins}  $A || B$ was interpreted by the quasi conjunction  of $A|B$ and $B|A$, by obtaining  $A||B=Q(A|B,B|A)=(AB)|(A\vee B)$. 
\section{Lower and upper bounds for  two-premise biconditional centering}
\label{SEC:LUBoundsTwoPremBiConCent}
In this section we determine the lower and upper bounds 
 for the conclusion of    two premise biconditional centering.\footnote{Coherence of probability assessments on conditional events can be  checked, for example, by the CkC-package \cite{capotorti07}.}
\begin{theorem}\label{THM:BICONDITIONALLOWERUPPER}
Let $A,B$ be any logically independent events. Given any (coherent) assessment $(x,y)\in[0,1]^2$ on $\{A,B\}$, for the biconditional event  $A||B$ the  extension $z =P(A||B)$ is coherent if and only if
$z \in [z', z'']$, 
where 
\begin{equation}
\begin{array}{ll}
z'=\max\left\{x+y-1,0\right\},\;\;
z''=\left\{
\begin{array}{ll}
\frac{\min\{x,y\}}{\max\{x,y\}},& \text{ if } x>0 \text{ or } y>0 ,\\
1,& \text{ if } x=0 \text{ and } y=0 .\\
\end{array}
\right.
\end{array}
\end{equation} 
\end{theorem}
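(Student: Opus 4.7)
The plan is to apply the standard convex-hull characterization of coherence, exactly as in the proofs of Theorem~\ref{THM:PIONITERATED} and Theorem~\ref{THM:PI}. First I would enumerate the four constituents $C_1=AB$, $C_2=A\no{B}$, $C_3=\no{A}B$, $C_4=\no{A}\,\no{B}$ generated by $A,B$ and record the points $Q_h$ associated with the assessment $\mathcal{P}=(x,y,z)$ on $\F=\{A,B,A||B\}$. Recalling that $A||B=(AB)|(A\vee B)$ equals $1$ on $C_1$, equals $0$ on $C_2$ and $C_3$, and coincides with its probability value $z$ on $C_4$, I obtain $Q_1=(1,1,1)$, $Q_2=(1,0,0)$, $Q_3=(0,1,0)$, $Q_4=(0,0,z)$.

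Next, I would impose the condition $\mathcal{P}=\sum_{h=1}^4\lambda_h Q_h$ with $\sum_h \lambda_h=1$ and $\lambda_h\geq 0$. The first two coordinates give $\lambda_2=x-\lambda_1$, $\lambda_3=y-\lambda_1$, $\lambda_4=1-x-y+\lambda_1$, so non-negativity forces $\lambda_1\in[\max\{x+y-1,0\},\min\{x,y\}]$, while the third coordinate yields $z=\lambda_1+z\lambda_4$, equivalently $z(x+y-\lambda_1)=\lambda_1$. Assuming $x+y>0$, one has $x+y-\lambda_1\geq \max\{x,y\}>0$, so $z=\lambda_1/(x+y-\lambda_1)$, which is strictly increasing in $\lambda_1$. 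Evaluating at the endpoints of the $\lambda_1$-interval then yields $z'=\max\{x+y-1,0\}$ (after separating the subcases $x+y\leq 1$ and $x+y>1$) and $z''=\min\{x,y\}/\max\{x,y\}$.

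In the degenerate case $x=y=0$ the system forces $(\lambda_1,\lambda_2,\lambda_3,\lambda_4)=(0,0,0,1)$, so that $\sum_{h:\,C_h\subseteq A\vee B}\lambda_h=0$; coherence then cannot be read off from plain convex-hull membership, and one invokes the $\mathcal{I}_0$-recursion (Theorem~\ref{CNES-PREV-I_0-INT}) to reduce the problem to the coherence of the single assessment $z=P(A||B)$, which gives $z\in[0,1]$ and matches $z'=0$, $z''=1$. I expect the main obstacle to be the self-referential character of $Q_4=(0,0,z)$: because the convex hull itself depends on $z$, the third coordinate equation must be treated as a rational equation in $\lambda_1$ and then monotonicity used to extract the bounds; the boundary case $x=y=0$ is the only point where the simple interpolation argument fails and the $\mathcal{I}_0$ machinery is needed.
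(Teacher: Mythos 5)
Your proof is correct and is essentially the paper's own argument: your $\lambda_1$ is exactly the paper's auxiliary quantity $\nu=P(AB)$, the increasing rational function $\lambda_1/(x+y-\lambda_1)$ evaluated at the Fr\'echet endpoints reproduces the paper's $f(\nu)=\nu/(x+y-\nu)$ in case $x>0$ or $y>0$, and the degenerate case $x=y=0$ is handled identically via the $\mathcal{I}_0$-reduction of Theorem~\ref{CNES-PREV-I_0-INT}. The only step worth making explicit in the non-degenerate case is that solvability of the system already yields coherence: since every solution satisfies $\lambda_1+\lambda_2+\lambda_3=x+y-\lambda_1\geq\max\{x,y\}>0$, one has $\mathcal{I}_0=\emptyset$, so convex-hull membership is equivalent to coherence there.
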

\begin{proof}
We consider two cases: $(i)$ $x>0$ or $y>0$; $(ii)$ $x=0$ and $y=0$.

Case $(i)$. As $P(A\vee B)\geq \max\{x,y\}$, it follows that $P(A\vee B)>0$. Then, defining $\nu=P(AB)$, one has $P(A||B)=\frac{P(AB)}{P(A\vee B)}=\frac{\nu}{x+y-\nu}$. 
 We recall that $\nu$ is  a coherent extension $(x,y)$ if and only if $\nu\in[\nu',\nu'']$, where $\nu'=\max\{x+y-1,0\}$ and $\nu''=\min\{x,y\}$. By observing that $f(\nu)=\frac{\nu}{x+y-\nu}$ is an increasing function of $\nu$, it follows that the assessment $z=P(A||B)$ 
 is a coherent extension of $(x,y)$ if and only  if 
 $z\in[z',z'']$, where $z'=\frac{\nu'}{x+y-\nu'}=\frac{\max\{x+y-1,0\}}{x+y-\max\{x+y-1,0\}}=\frac{\max\{x+y-1,0\}}{\min\{x+y,1\}}=\max\{x+y-1,0\}$, and 
$z''=\frac{\nu''}{x+y-\nu''}=\frac{\min\{x,y\}}{\max\{x,y\}}$. We observe that if $x=0$ or $y=0$, then $z''=z'=0$; if $x=y=1$, then $z'=z''=1$. Moreover, if $x>0$ and $y>0$, then $z''=\min\{\frac{x}{y},\frac{y}{x}\}$.

		Case $(ii)$ ($x=y=0$). The constituents $C_{h}$'s, $h=1,2,3,4$, associated
		with the assessment $(0,0,z)$ on $\{A,B,{AB|(A\vee B)}\}$, and the
		corresponding points $Q_{h}$'s are $C_{1}=AB, C_{2}=A\widebar{B}, C_{3}=
		\widebar{A}B, C_{4}=\widebar{A}\;\widebar{B}$, and $Q_{1}=(1,1,1), Q
		_{2}=(1,0,0), Q_{3}=(0,1,0), Q_{4}=(0,0,z)$, respectively. As the
		prevision point $(0,0,z)$ coincides with $Q_{4}$, then it belongs to the
		convex hull of points $Q_{1},\ldots ,Q_{4}$, that is the associated
		system $(\Sigma )$, as defined in Section~\ref{sect:2.2}, is solvable.
		As $P(A\vee B)\leq \min \{x+y,1\}=0$, each solution $(\lambda_{1},
		\ldots ,\lambda_{4})$ of $(\Sigma )$ is such that $
		\sum_{h: C_{h}\subseteq A \vee B}\lambda_{h}=\lambda_1+\lambda_2+\lambda_3=0$, so that $\mathcal{I}
		_{0}=\{3\}$. By Theorem~\ref{CNES-PREV-I_0-INT}, $(0,0,z)$ is coherent
		if and only if $z$ is coherent, which amounts to $z\in [0,1]$.

\end{proof}
\section{Reversed inferences and  bounds on biconditional AND rule}
\label{SEC:ReversedInferences}
In this section we first recall the lower and upper bounds on the conclusion of the biconditional AND rule. Then,   we study the reverse inferences from the prevision assessment on the conclusion $A||B=(A|B)\wedge (B|A)$ to the premises  $\{A|B, B|A\}$. That is, starting with a given assessment  $z \in [0,1]$ on $A||B$,  we determine the set $D_z$ of all coherent extensions $(x,y)$, where $x=P(A|B)$ and $y=P(B|A)$.
We recall the following probabilistic propagation rule (\cite{gilio13ins}).  Let  $(x,y)$ be  any coherent assessment on $\{A|B,B|A\}$; then, the probability assessment $z = P(A||B)$ is
a coherent extension of $(x,y)$ if and only if
\begin{equation}\label{EQ:PROPRULEBICONDTIONAL}
z=T^{H}_{0}(x,y)=\left\{\begin{array}{ll}
0, & \text{ if } x=0 \text{ or } y=0 \,,\\
\frac{xy}{x+y-xy}=\tfrac{1}{\frac{1-x}{x}+\frac{1-y}{y}+1}, &  \text{ if } 0<x\leq 1 \text{ and }  0<y\leq 1\,,
\end{array}\right.
\end{equation}
where  $T_0^H(x,y)$ is  the Hamacher t-norm, with parameter $\lambda=0$.
We obtain
\begin{theorem}\label{THM:Dz}
Let $A,B$ be any logically independent events. Given any assessment $z\in[0,1]$ on $A||B$, the  extension $(x,y)$ on $\{A|B,B|A\}$ is coherent if and only if $(x,y)\in D_z$, where 
\[
D_z=
\left\{
\begin{array}{ll}
 \{(x,y)\in[0,1]^2: x=0 \text{ or } y=0\}, & \text{ if } z=0,\\
 \{(x,y)\in[0,1]^2:z\leq x\leq 1, y=\frac{xz}{x-z+xz}\}, & \text{ if } 0<z\leq 1\,.
\end{array}
\right.
\]
\end{theorem}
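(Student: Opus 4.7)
The plan is to invert the propagation rule in~(\ref{EQ:PROPRULEBICONDTIONAL}), which asserts that a coherent extension $z=P(A||B)$ of an assessment $(x,y)$ on $\{A|B,B|A\}$ must equal the Hamacher t-norm $T_0^H(x,y)$. Since $A,B$ are logically independent, every $(x,y)\in[0,1]^2$ is coherent on $\{A|B,B|A\}$, so $D_z$ is exactly the preimage $\{(x,y)\in[0,1]^2 : T_0^H(x,y)=z\}$. First I would establish this reformulation and then split into the two cases $z=0$ and $z>0$.

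For $z=0$, the rule gives $T_0^H(x,y)=0$ precisely when $x=0$ or $y=0$: indeed, whenever $x,y\in(0,1]$, the denominator $x+y-xy=x+y(1-x)$ is strictly positive and the numerator $xy$ is strictly positive, so $T_0^H(x,y)>0$. This immediately yields $D_0=\{(x,y)\in[0,1]^2 : x=0 \text{ or } y=0\}$.

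For $0<z\leq 1$, any $(x,y)\in D_z$ must satisfy $x,y>0$ by the previous observation, so I would solve $\frac{xy}{x+y-xy}=z$ algebraically for $y$. Clearing denominators gives $xy=zx+zy-zxy$, hence $y(x-z+zx)=zx$, and therefore $y=\frac{xz}{x-z+xz}$, as claimed. The remaining task is to identify which values of $x$ keep this $y$ in $(0,1]$. Since $0<z\leq x$ implies $x-z+xz\geq xz>0$, the denominator is positive, giving $y>0$; and $y\leq 1$ is equivalent (upon clearing the positive denominator) to $xz\leq x-z+xz$, i.e., $x\geq z$. Conversely, $y>0$ forces $x-z+xz>0$, i.e., $x>\tfrac{z}{1+z}$, and $y\leq 1$ forces $x\geq z$, so the two inequalities collapse to $z\leq x\leq 1$.

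The only subtlety worth checking explicitly is that the function $x\mapsto\frac{xz}{x-z+xz}$ is a well-defined bijection from $[z,1]$ onto $[z,1]$ (it sends $z\mapsto 1$ and $1\mapsto z$ and is monotone), which confirms the symmetric roles of $x$ and $y$ and ensures consistency with the formula obtained by exchanging them. I expect the main (minor) obstacle to be this bookkeeping: verifying that the single constraint $z\leq x\leq 1$ subsumes the apparently separate positivity and upper-bound constraints on $y$, so that the parametric description $D_z=\{(x,\tfrac{xz}{x-z+xz}): x\in[z,1]\}$ is both necessary and sufficient.
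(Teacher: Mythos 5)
Your proposal is correct and follows essentially the same route as the paper: both invert the propagation rule~(\ref{EQ:PROPRULEBICONDTIONAL}), treat the cases $z=0$ and $0<z\leq 1$ separately, and solve $z=\frac{xy}{x+y-xy}$ for $y$ to get $y=\frac{xz}{x-z+xz}$ on $z\leq x\leq 1$. The only minor difference is that the paper obtains the constraints $x\geq z$ and $y\geq z$ from the Goodman--Nguyen inclusions $AB|(A\vee B)\subseteq A|B$ and $AB|(A\vee B)\subseteq B|A$, whereas you recover the range restriction algebraically by requiring $y\in(0,1]$; both justifications are valid.
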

\begin{proof}
From (\ref{EQ:PROPRULEBICONDTIONAL}) the set $\Pi$ of all coherent assessments $(x,y,z)$
 on $\{A|B,B|A, A||B\}$ is 
 \begin{equation}\label{EQ:PI2}
 \Pi=\{(x,y,z): (x,y)\in[0,1]^2, z=T_0^H(x,y)\}.
 \end{equation}
Assume that  $z=0$. We notice that  the assessment $(x,y,0)\in \Pi$ if and only if 
 $x=0$ or $y=0$, with $(x,y)\in[0,1]^2$. Then, $D_0=
\{ (x,y)\in[0,1]^2: x=0 \text{ or } y=0\}$.
Assume that $0<z\leq 1$.
By Goodman and Nguyen   inclusion relation among conditional events, as
 $ AB|(A \vee B) \subseteq A|B$ and $ AB|(A \vee B)
\subseteq B|A$,   coherence requires that (see, e.g., \cite[Theorem~6]{gilio13ins})  $x \geq z > 0$ and $y \geq z > 0$; thus
$xy > 0$, $x+y-xy> 0$, and $x-z+xz>0$. Then, from (\ref{EQ:PROPRULEBICONDTIONAL}) and  (\ref{EQ:PI2}) it holds that $z=\frac{xy}{x+y-xy}$, so that  $y=\frac{xz}{x-z+xz}$.
Therefore, $D_z=\{(x,y):z\leq x\leq 1, y=\frac{xz}{x-z+xz}\}$.
\end{proof}
\begin{remark}
Based on Theorem \ref{THM:Dz},
 the set $\Pi$ of all coherent assessments $(x,y,z)$
on $\{A|B,B|A, A||B\}$ can also be written as
\[
\Pi=\{(x,y,z): z\in[0,1], (x,y)\in D_z\}.
\]
Moreover, by symmetry, we observe that, if $z>0$, the set $D_z$ in Theorem \ref{THM:Dz} can also be written as  $D_z=\{(x,y)\in[0,1]^2:z\leq y\leq 1, x=\frac{yz}{y-z+yz}\}$.
\end{remark}
\section{Two-premise centering with logical relations and counterfactuals}
\label{SEC:COUNTERFACTUALS}
In this section we consider an instance of two premise-centering, with a logical dependency, that can be used to study some counterfactuals.
Specifically,  we consider the inference: $\{B|\Omega,C|A\}$ p-entails $(C|A)|(B|\Omega)$, with  $AB=\bot$.
As $B|\Omega=B$, this inference can be simply written as 
\begin{equation}
\{B,C|A\}\models_p (C|A)|B, \;\; \text{ with }  AB=\bot.
\end{equation}
We first show that, assuming $P(B)>0$,  the prevision of the conclusion $(C|A)|B$  coincides just with   $P(C|A)$, i.e.,  
$\prev[(C|A)|B]=P(C|A)$.
By (\ref{EQ:REPRES}) the conjunction of $B$ and $C|A$ reduces to   
the random quantity
$(C|A)| B =yB$, where   $y=P(C|A)$. 
Then, by linearity of the  prevision, $\mathbb{P}[(C|A)\wedge B]=P(C|A)P(B)$.
Moreover, by (\ref{EQ:PRODUCTFORMULA}), it holds that
$\mathbb{P}[(C|A)\wedge B]=\mathbb{P}[(C|A)|B]P(B)$ and then, by assuming $P(B)>0$,  we obtain
\begin{equation}\label{EQ:INCOMP}
\mathbb{P}[(C|A)|B]=\frac{\mathbb{P}[(C|A)\wedge B]}{P(B)}=\frac{P(C|A)P(B)}{P(B)}=P(C|A)\,.
\end{equation}
Now we show that  (\ref{EQ:INCOMP}) holds in general, even if $P(B)=0$, by also showing that 
the iterated conditional $(C|A)|B$ is constant and coincides with $P(C|A)$, when $AB=\bot$.
As $(C|A)\wedge B=yB$,   by Definition \ref{ITER-COND}, $(C|A)|B=yB+\mu\no{B}$.
Moreover, 
as $B\subseteq \no{A}$, {\em conditionally on $B$ being true}, it holds that: $C|A=AC+y\no{A}=y$; 
   that is, when $B$ is true,  $C|A$ is constant and equal to $y$. Then, by coherence, 
$\mu = \pr[(C|A)|B]=\pr[(AC+y\no{A})|B] =\pr(y|B)= y$ (see  \cite[Remark 1]{GiSa13c}). Therefore, when $AB=\bot$ it holds that
$(C|A)|B=yB+y\no{B}=y$, i.e., the iterated conditional $(C|A)|B$ is constant and equal to  $P(C|A)$. Then,
trivially, when $AB=\bot$  it holds that
$\prev[(C|A)|B]=P(C|A)$, i.e. the prevision of the iterated conditional
``if $B$ then (if $A$ then  $C$)'' coincides with the probability of ``(if $A$ then  $C$)''.
Therefore,  the probability of $B$ does not play a  role in propagating the uncertainty from the premise set $\{B,C|A\}$ to the conclusion $(C|A)|B$. In particular,  if $B=\no{A}$, then   $(C|A)|\no{A}=P(C|A)$. 

This result can be used as a model for some instances of counterfactuals. 
\emph{Counterfactuals} are conditionals in the subjunctive mood, which people usually use 
when they believe that the antecedents are false. For example, the assertion of ``If the glass had fallen  from the table, then it would have broken'' conversationally implies  that the speaker believes that the glass did not fall.  Counterfactuals are important for causal reasoning and for hypothetical thinking in general. There is experimental evidence that people judge the probability of a counterfactual, ``If $A$ were the case, then $C$ would be the case'', as the conditional probability, $P(C | A)$ \cite{over07b,pfeifer2015,pfeiferTulkki17}. Moreover, when presented with causal, e.g., ``If a patient were to take certain  drug, the symptoms would diminish'', or non-causal task material, e.g., ``If the card were to show a square, it would be black'', people judge the negations of the antecedents to be irrelevant to the evaluation of the counterfactuals \cite{pfeifer2015,pfeiferTulkki17}. These negations state the actual facts, e.g., ``The patient does not take the drug'', or ``The side does not show a square'', respectively. This speaks for the psychological plausibility of our basic intuition, which also underlies Stalnaker's extension of the Ramsey test to counterfactuals \cite{edgington14,evans04,stalnaker68}: when we evaluate the counterfactual ``If $A$ were the case, $C$ would be the case'', we hypothetically remove, or set aside, 
our information that A is false from our beliefs and assess $C$ under the assumption that $A$ is true. This matches the psychological data  \cite{pfeifer2015,pfeiferTulkki17}. One starting point of  modeling such situations is given by the aforementioned iterated conditional $(C|A)|B$, with $B\subseteq \no{A}$, where $B$ represents the factual statement which provides evidence that $\no{A}$. 

We remark 
that, contrary to \cite{mcgee89}, in general the iterated conditional $(C|A)|B$, when $A,B,C$ are logically independent, does not  coincide with the conditional event $C|AB$. 
Indeed, by setting $\prev[(C|A)|B]=\mu$ and  $P(C|A)=y$, from Definition \ref{ITER-COND} we obtain
\[
(C|A)|B= (C|A)\wedge B + \mu\no{B}=
\left\{\begin{array}{ll}
1, &\mbox{ if }  ABC \mbox{ is true,}\\
0, &\mbox{ if }  AB\no{C} \mbox{ is true,}\\
y, &\mbox{ if }  \no{A}B \mbox{ is true,}\\
\mu, &\mbox{ if }  \no{B} \mbox{ is true,}\\
\end{array}
\right.
\]
while, assuming  $AB\neq \emptyset$ and  $P(C|AB)=z$, it holds that
\[
C|AB= ABC + z\no{AB}=
\left\{\begin{array}{ll}
1, &\mbox{ if }  ABC \mbox{ is true,}\\
0, &\mbox{ if }  AB\no{C}  \mbox{ is true,}\\
z, &\mbox{ if }  \no{AB} \mbox{ is true;}
\end{array}
\right.
\]
thus: $(C|A)|B\neq C|AB$. Moreover, as 
$(C|A)|B=(AC+y\no{A})|B=AC|B+y\no{A}|B$, by linearity of prevision and  product formula
\begin{equation}
\prev[(C|A)|B]=P(C|AB)P(A|B)+P(C|A)P(\no{A}|B). 
\end{equation}
	Therefore, like in \cite{adams75,Kauf09}, the Import-Export
	Principle is not valid in our approach. Then, as proved in
	\cite{GiSa14}, we avoid the counter-intuitive consequences related to
	Lewis' well-known first triviality result (\cite{lewis76}).
	Moreover, if the Import-Export Principle were added as an axiom to our
	theory, assuming $AB=\emptyset $, $A\neq \emptyset , B\neq \emptyset
	$, we would have on one hand $(C|A)|B =P(C|A)$; on the other hand it
	would be $(C|A)|B= C|AB = C|\emptyset $; thus, we would obtain an
	inconsistency. We also recall that, following de Finetti, objects
	like $C|\emptyset $ are not considered in our approach. Finally, we
	point out that we are able to manage counterfactuals; indeed, in our
	approach the counterfactual $C|A$ when $A$ is believed to be false is
	not $C|\emptyset $, but $(C|A)|\widebar{A}$, which coincides with
	$P(C|A)$.

\section{Conclusions}\label{SEC:CONCLUSION}
We have presented a probabilistic analysis of the conjunction and iteration of conditional events, and of the centering inference for these conjunctions and iterations. In our approach  conjoined conditionals  and iterated conditionals are conditional random quantities defined  in the setting of coherence. By this approach we can overcome some objections made in the past to the conditional probability hypothesis for natural language conditionals, that $P(\textit{if } A \textit{ then } B) = P(B|A)$. This hypothesis is fundamental for the new Bayesian and probabilistic approaches in the psychology of reasoning and has been confirmed in many papers (\cite{over2016,OverCruz17,pfeifer,pfeifer10a,pfeifer10b,pfeifertulkkiCogsci17,pfeiferyama17}).  This identity is also central to our analysis of both indicative and counterfactual conditionals as conditional events.

We have proved the p-validity of one-premise and two-premise centering when basic events are replaced by conditional events. We have determined the lower and upper bounds for the conclusion of two-premise centering; we have also studied the classical case and have obtained the same lower and upper bounds. We have proved the p-validity of an analogue of the classical biconditional introduction rule for conditional events (biconditional AND rule). We have verified the p-validity of one-premise and two-premise biconditional centering, and have given the lower and upper bounds for the conclusion of two-premise biconditional centering. We have investigated reversed inferences, by determining the lower and upper bounds for the premises of the biconditional AND rule. We have briefly indicated how to apply our results to the study of selected counterfactuals.

 It is often argued that there are deep differences between indicative and counterfactual conditionals. For example, the indicative conditional, ``If Oswald did not kill Kennedy then someone else did'', is not equivalent to the counterfactual conditional, ``If Oswald had not killed Kennedy then someone else would have'' (\cite{edgington14}). We will explore in future work the more detailed similarities and differences between these two forms of the conditional.
\section*{Acknowledgement}
We are grateful to  anonymous referees for helpful comments and suggestions. 
We thank \emph{Deutsche Forschungsgemeinschaft} (DFG),  \emph{Fondation Maison des Sciences de l'Homme} (FMSH), and \emph{Villa Vigoni} for supporting joint meetings at Villa Vigoni where parts of this work originated (Project: ``Human Rationality: Probabilistic Points of View''). Niki Pfeifer is supported by his DFG project PF~740/2-2 (within the SPP1516 ``New Frameworks of Rationality'').  Giuseppe Sanfilippo has been partially supported by the INdAM--GNAMPA Project (2016 Grant U 2016/000391). 
\appendix
\section{Some technical aspects on coherence}
In this Appendix, which expands  Section~\ref{sect:2.2}, we 
illustrate some technical aspects which concern coherence of probability and prevision assessments on conditional events and conditional random quantities.
\label{Appendix}
\subsection{Coherence Checking}
Let be given a family $\mathcal{F}_n = \{E_1|H_1, \ldots, E_n|H_n\}$ and a probability assessment $\mathcal{P}_n =(p_1, \ldots, p_n)$ on $\mathcal{F}_n$,
where $p_i = P(E_i|H_i) \, ,\;\; i = 1, \ldots, n$.
We set $J_n=\{1,2,\ldots,n\}$.
We recall that the constituents $C_0,C_1,\ldots, C_m$ generated by $\mathcal{F}_n$, where  $C_0=\no{\H}_n=\no{H_1}\no{H_2}\cdots \no{H_n}$,  are obtained by expanding the expression  $\bigwedge_{i=1}^n(E_iH_i \vee \no{E}_iH_i
\vee \no{H}_i)$.
For  each $h\in J_m$, with the constituent $C_h$  we associate a point
$Q_h = (q_{h1}, \ldots, q_{hn})$, where for each $j\in J_n$, $q_{hj} = 1$, or 0, or $p_j$, according to whether $C_h \subseteq E_jH_j$, or $C_h \subseteq \no{E}_jH_j$, or $C_h \subseteq \no{H}_j$.
Denoting by $\mathcal{I}$ the convex hull of  $Q_1, \ldots, Q_m$,
 by a suitable alternative theorem (\cite[Theorem~2.9]{Gale60}), the condition $\mathcal{P}_n \in \mathcal{I}$
is equivalent to  the condition $\min  \mathcal{D}_{\mathcal{H}_n} \leq 0 \leq \max
\mathcal{D}_{\mathcal{H}_n}$ given in Definition~\ref{COER-BET}  (see, e.g., \cite{Gili96,gilio13ins}). Moreover, the condition $\mathcal{P}_n \in \mathcal{I}$ amounts to the  solvability of the
following system ($\Sigma$) in the unknowns $\lambda_1, \ldots,
\lambda_m$
\[
(\Sigma): \hspace{1 cm}
\sum_{h=1}^m q_{hj} \lambda_h = p_j \; , \; \; j\in J_n \,
;\;\; \sum_{h=1}^m \lambda_h = 1 \; ;\; \; \lambda_h \geq 0 \, ,
\; h \in J_m\,.
\]
We say that system $(\Sigma)$ is  associated with the pair $(\mathcal{F}_n,\mathcal{P}_n)$.  Hence, the following result
 provides a characterization  of  the notion of coherence given in Definition~\ref{COER-BET} (\cite[Theorem~4.4]{Gili90}, see also \cite{Gili92,GiSa11a, gilio13ins})

\begin{theorem}\label{CNES}	Let $\K$ be  an arbitrary family of  conditional events  and let $P$ be a probability function  defined on $\K$. The  function $P$ is coherent if and only if, for
every finite subfamily $\F_n=\{E_1|H_1, \ldots, E_n|H_n\}$  of $\K$, denoting by  $\mathcal{P}_n$ the vector $(p_1, \ldots, p_n )$, where $p_j=P(E_j|H_j)$, $j=1,2,\ldots,n$, the 
		 system $(\Sigma)$ associated with the pair
		$(\F_n,\mathcal{P}_n)$ is solvable.
		  \end{theorem}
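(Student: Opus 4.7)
\medskip

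\noindent\textbf{Proof plan for Theorem~\ref{CNES}.}

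The plan is to reduce the global statement (over the whole family $\K$) to a pointwise statement on each finite subfamily $\F_n$, and then establish, for a fixed $\F_n$ with assessment $\P_n$, the chain of equivalences:
\emph{betting condition} $\Longleftrightarrow$ $\P_n \in \mathcal{I}$ $\Longleftrightarrow$ solvability of $(\Sigma)$.
Since Definition~\ref{COER-BET} quantifies over all finite subfamilies of $\K$, and since the solvability of $(\Sigma)$ is also formulated finite-subfamily by finite-subfamily, the passage from the global ``if and only if'' to the finite one is immediate; so from the outset I would fix $\F_n = \{E_1|H_1,\ldots,E_n|H_n\}$ and $\P_n = (p_1,\ldots,p_n)$ and work there.

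First I would rewrite the random gain $G = \sum_{i=1}^n s_i H_i(E_i - p_i)$ in terms of the points $Q_h=(q_{h1},\ldots,q_{hn})$ associated with the constituents $C_1,\ldots,C_m \subseteq \H_n$. The key observation is that on $C_h$ (with $h \in J_m$) the quantity $H_i(E_i - p_i)$ equals $q_{hi} - p_i$: indeed, if $C_h \subseteq E_iH_i$ then $q_{hi}=1$ and the term equals $1-p_i$; if $C_h \subseteq \widebar{E}_iH_i$ then $q_{hi}=0$ and the term equals $-p_i$; if $C_h \subseteq \widebar{H}_i$ then $q_{hi}=p_i$ and the term is $0$. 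Hence for every $h \in J_m$,
\begin{equation*}
g_h \;=\; \mathbf{s}\cdot(Q_h - \P_n), \qquad \mathbf{s}=(s_1,\ldots,s_n).
\end{equation*}
This identifies $\D_{\H_n}$ with $\{\mathbf{s}\cdot(Q_h-\P_n) : h\in J_m\}$.

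Next, I would prove the equivalence
\begin{equation*}
\min \D_{\H_n} \leq 0 \leq \max \D_{\H_n} \text{ for every } \mathbf{s}\in\mathbb{R}^n
\;\Longleftrightarrow\;
\P_n \in \mathcal{I} = \mathrm{conv}(Q_1,\ldots,Q_m).
\end{equation*}
The forward implication ($\Leftarrow$) is the easy one: if $\P_n = \sum_h \lambda_h Q_h$ with $\lambda_h\geq 0$, $\sum_h\lambda_h=1$, then $\sum_h \lambda_h g_h = \mathbf{s}\cdot(\P_n - \P_n) = 0$, so the $g_h$'s cannot be all strictly positive nor all strictly negative. The nontrivial direction ($\Rightarrow$) is where the alternative theorem cited from \cite{Gale60} is used: if $\P_n \notin \mathcal{I}$, the convex-set separation yields $\mathbf{s}\in\mathbb{R}^n$ such that $\mathbf{s}\cdot Q_h > \mathbf{s}\cdot \P_n$ for all $h \in J_m$, i.e.\ $g_h > 0$ for all $h$, contradicting $\min \D_{\H_n}\leq 0$. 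This separating-hyperplane step is the main technical obstacle; it is clean here because $\mathcal{I}$ is the convex hull of finitely many points, hence compact.

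Finally, I would close the argument by translating ``$\P_n \in \mathcal{I}$'' into the explicit linear-algebraic form: $\P_n \in \mathrm{conv}(Q_1,\ldots,Q_m)$ means exactly that there exist $\lambda_1,\ldots,\lambda_m \geq 0$ with $\sum_h \lambda_h = 1$ and $\sum_h q_{hj}\lambda_h = p_j$ for $j\in J_n$, which is precisely the solvability of the system $(\Sigma)$ associated with $(\F_n,\P_n)$. Combining this with the two earlier equivalences, and quantifying over all finite subfamilies $\F_n$ of $\K$, yields the stated characterization of coherence.
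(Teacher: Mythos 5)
Your proposal is correct and follows essentially the same route the paper takes: the paper does not spell out a proof but cites \cite[Theorem~4.4]{Gili90} and sketches exactly this argument in the Appendix, namely that by the alternative theorem of \cite[Theorem~2.9]{Gale60} the betting condition is equivalent to $\mathcal{P}_n\in\mathcal{I}$, which in turn is by definition the solvability of $(\Sigma)$. Your write-up simply fills in the details (the identity $g_h=\mathbf{s}\cdot(Q_h-\mathcal{P}_n)$ and the separation argument), which is a faithful expansion of the cited reasoning.
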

We recall now some results on the coherence checking of a probability assessment on a finite family of conditional events. 
Given a probability  assessment  $\mathcal{P}_n=(p_1, \ldots, p_n )$ on a finite family of conditional events $\mathcal{F}_n=\{E_1|H_1, \ldots, E_n|H_n\}$, let $S$ be the set of solutions $\Lambda = (\lambda_1, \ldots,
\lambda_m)$ of the system $(\Sigma)$. Then, assuming $S \neq \emptyset$, we define
\[\begin{array}{l}
\Phi_j(\Lambda) = \Phi_j(\lambda_1, \ldots, \lambda_m) = \sum_{r :
	C_r \subseteq H_j} \lambda_r \; , \; \; \; j \in J_n \,;\; \Lambda \in S \,;
\\
M_j  =  \max_{\Lambda \in S } \; \Phi_j(\Lambda) \; , \; \; \; j\in J_n\,;
\;\;\; I_0  =  \{ j \, : \, M_j=0 \} \,.
\end{array}\]
Of course, if   $S \neq \emptyset$, then $S$ is a closed bounded set and the  maximum  $M_j$ of the linear function $\Phi_j(\Lambda)=\sum_{r :
	C_r \subseteq H_j} \lambda_r$  there exists   for every $j\in J_n$. 
We observe that, assuming $\P_n$ coherent, each solution $\Lambda=(\lambda_1, \ldots,
\lambda_m)$ of system $(\Sigma)$ is a coherent extension of the assessment $\mathcal{P}_n$ on $\mathcal{F}_n$ to the family $\{C_1|\H_n,C_2|\H_n,\, \ldots,\,
C_m|\H_n\}$. Then, by the additivity property, the quantity $\Phi_j(\Lambda)$ is the conditional probability $P(H_j|\H_n)$ and the quantity $M_j$ is the upper probability $P^*(H_j|\H_n)$ over all the solutions $\Lambda$ of system $(\Sigma)$.
Of course, $j \in I_0$ if and only if $P^*(H_j|\H_n)=0$. Notice that $I_0$  is a strict subset of $J_n$. We denote by $(\mathcal{F}_0, \mathcal{P}_0)$ the pair associated with $I_0$.
Given the pair $(\mathcal{F}_n,\mathcal{P}_n)$ and a (nonempty) strict subset $J$ of $ J_n$, we denote by $(\mathcal{F}_J, \mathcal{P}_J)$ the pair associated with
$J$ and by $(\Sigma_J)$ the corresponding system.
We observe that $(\Sigma_J)$ is solvable if and only if $\mathcal{P}_J  \in \mathcal{I}_J$,
where $\mathcal{I}_J$ is the convex hull associated with the pair $( \mathcal{F}_J,
\mathcal{P}_J)$. Then, we have  (\cite[Theorem~3.2]{Gili93}; see also \cite{2003BGS-IJUFKS,Gili95})
\begin{theorem}\label{GILIO-93}{\rm
		Given a probability assessment $\mathcal{P}_n$ on the family $\mathcal{F}_n$, if
		the system $(\Sigma)$ associated with $(\mathcal{F}_n,\mathcal{P}_n)$ is solvable, then for every $J\subset J_n$, such that $J\setminus I_0\neq \emptyset$, the system $(\Sigma_J)$ associated with $(\mathcal{F}_J,\mathcal{P}_J)$ is solvable too.}
\end{theorem}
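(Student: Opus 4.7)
The plan is to take a solution $\Lambda^{*} = (\lambda_1^{*}, \ldots, \lambda_m^{*})$ of the fine-grained system $(\Sigma)$ associated with $(\mathcal{F}_n, \mathcal{P}_n)$, and coarsen it to a solution $N = (\nu_1, \ldots, \nu_{m'})$ of the sub-system $(\Sigma_J)$ associated with $(\mathcal{F}_J, \mathcal{P}_J)$. Write $D_1, \ldots, D_{m'}$ for the constituents generated by $\mathcal{F}_J$ that are contained in $\mathcal{H}_J = \bigvee_{j \in J} H_j$, and note that each such $D_k$ is a disjunction of certain $C_h$'s with $C_h \subseteq \mathcal{H}_J$. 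I would first aggregate by setting $\mu_k = \sum_{h\,:\,C_h \subseteq D_k} \lambda_h^{*}$, and then rescale by $\nu_k = \mu_k / \Phi_{\mathcal{H}_J}(\Lambda^{*})$, where $\Phi_{\mathcal{H}_J}(\Lambda^{*}) = \sum_{h\,:\,C_h \subseteq \mathcal{H}_J} \lambda_h^{*}$.

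The hypothesis $J \setminus I_0 \neq \emptyset$ is exactly what guarantees the rescaling is well-defined. Pick $j^{*} \in J \setminus I_0$; then $M_{j^{*}} > 0$, so there exists a solution $\Lambda^{*}$ of $(\Sigma)$ with $\Phi_{j^{*}}(\Lambda^{*}) > 0$. Since $H_{j^{*}} \subseteq \mathcal{H}_J$, this forces $\Phi_{\mathcal{H}_J}(\Lambda^{*}) \geq \Phi_{j^{*}}(\Lambda^{*}) > 0$. Choosing this $\Lambda^{*}$ for the construction, we automatically obtain $\nu_k \geq 0$ and $\sum_k \nu_k = 1$.

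The verification of the equations $\sum_k r_{kj} \nu_k = p_j$ for $j \in J$ is the key calculation. The central observation is that for any $h$ with $C_h \subseteq D_k$ and $j \in J$, the coordinates $q_{hj}$ (in $(\Sigma)$) and $r_{kj}$ (in $(\Sigma_J)$) coincide, since $D_k \subseteq E_j H_j$ (resp.\ $\overline{E}_j H_j$, resp.\ $\overline{H}_j$) forces the same containment on $C_h$. Hence $\sum_k r_{kj}\mu_k = \sum_{h\,:\,C_h \subseteq \mathcal{H}_J} q_{hj}\lambda_h^{*}$. The remaining task is to relate the latter to $p_j$: every constituent $C_h \subseteq \overline{\mathcal{H}}_J \cap \mathcal{H}_n$ is contained in $\overline{H}_j$ for \emph{every} $j \in J$, so it contributes $p_j \lambda_h^{*}$ to the equation $\sum_h q_{hj}\lambda_h^{*} = p_j$. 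Subtracting these contributions yields $\sum_{h\,:\,C_h \subseteq \mathcal{H}_J} q_{hj}\lambda_h^{*} = p_j\bigl(1 - \sum_{h\,:\,C_h \subseteq \overline{\mathcal{H}}_J \cap \mathcal{H}_n} \lambda_h^{*}\bigr) = p_j\,\Phi_{\mathcal{H}_J}(\Lambda^{*})$, using $\sum_h \lambda_h^{*} = 1$. Dividing by $\Phi_{\mathcal{H}_J}(\Lambda^{*})$ gives the desired identity.

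The main obstacle is the normalization step: if $\Phi_{\mathcal{H}_J}(\Lambda) = 0$ for some arbitrary solution $\Lambda$ of $(\Sigma)$, then the naive rescaling fails, so one cannot start from an arbitrary solution. The condition $J \setminus I_0 \neq \emptyset$ is precisely what allows us to select a more favorable solution $\Lambda^{*}$ for which $\Phi_{\mathcal{H}_J}(\Lambda^{*}) > 0$; once this choice is made, the rest of the argument is a bookkeeping check that constituents outside $\mathcal{H}_J$ contribute a multiple of $p_j$ to the $j$-th equation, cleanly separating off after rescaling.
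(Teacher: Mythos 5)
Your proof is correct. Note, however, that the paper gives no proof of this statement at all: it is quoted verbatim as Theorem~3.2 of Gilio (1993) (reference [Gili93]), so there is no internal argument to compare against. Your construction is exactly the standard one behind that cited result: pick $j^{*}\in J\setminus I_{0}$, so some solution $\Lambda^{*}$ of $(\Sigma)$ has $\Phi_{j^{*}}(\Lambda^{*})>0$ and hence, since $H_{j^{*}}\subseteq \mathcal{H}_{J}$, a positive normalizing mass $\Phi_{\mathcal{H}_{J}}(\Lambda^{*})$; aggregate $\Lambda^{*}$ over the coarser constituents generated by $\mathcal{F}_{J}$ and renormalize. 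The two delicate points are both handled: (i) every coarse constituent $D_{k}\subseteq \mathcal{H}_{J}$ is a nonempty union of fine constituents $C_{h}\subseteq \mathcal{H}_{J}\subseteq \mathcal{H}_{n}$, and for $j\in J$ the coordinate of $C_{h}$ in $(\Sigma)$ coincides with that of $D_{k}$ in $(\Sigma_{J})$ (each $C_h$ inherits the containment in $E_{j}H_{j}$, $\overline{E}_{j}H_{j}$ or $\overline{H}_{j}$ from $D_k$); (ii) the fine constituents in $\mathcal{H}_{n}\wedge\overline{\mathcal{H}}_{J}$ have $q_{hj}=p_{j}$ for all $j\in J$, so their total contribution to the $j$-th equation is $p_{j}\bigl(1-\Phi_{\mathcal{H}_{J}}(\Lambda^{*})\bigr)$, and the renormalized aggregated vector satisfies $(\Sigma_{J})$. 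So the argument is complete and agrees in substance with the proof in the cited source.
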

The previous result says that the condition $\P_n \in \I$ implies $\P_J \in \I_J$ when $J\setminus I_0\neq \emptyset$. We observe that, if $\P_n \in \I$, then for every nonempty subset $J$ of $J_n\setminus I_0$ it holds that $J\setminus I_0=J \neq \emptyset$; hence, by Theorem \ref{CNES}, the subassessment $\P_{J_n\setminus I_0}$
on the subfamily $\F_{J_n\setminus I_0}$ is coherent. In particular, when $I_0$ is empty, coherence of $\P_n$ amounts to solvability of system $(\Sigma)$, that is to condition $\P_n \in \I$. When $I_0$ is not empty, coherence of $\P_n$ amounts to the validity of both conditions $\P_n \in \I$ and $\P_0$ coherent, as shown  below (\cite[Theorem~3.3]{Gili93}).
\begin{theorem}\label{COER-P0}{\rm The assessment $\mathcal{P}_n$ on $\mathcal{F}_n$ is coherent if and only if the following conditions are satisfied:
		(i) $\mathcal{P}_n \in \mathcal{I}$; (ii) if $I_0 \neq \emptyset$, then $\mathcal{P}_0$ is coherent.
}\end{theorem}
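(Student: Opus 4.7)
The plan is to prove both directions by combining Theorem~\ref{CNES} (which characterizes coherence as solvability of $(\Sigma)$ for every finite subfamily) with Theorem~\ref{GILIO-93} (which propagates solvability of $(\Sigma)$ to any subsystem $(\Sigma_J)$ provided $J\setminus I_0\neq \emptyset$). Condition (i) captures solvability of the full system, and condition (ii) is exactly what is needed to cover the subfamilies indexed entirely within $I_0$, which are precisely the ones that Theorem~\ref{GILIO-93} cannot reach from (i) alone.

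For the necessity, assume $\mathcal{P}_n$ is coherent. Theorem~\ref{CNES} applied to the full family $\mathcal{F}_n$ yields solvability of $(\Sigma)$, i.e.\ $\mathcal{P}_n\in\mathcal{I}$, giving (i). For (ii), observe that coherence passes to every subfamily: if $I_0\neq\emptyset$, then $\mathcal{P}_0$, being the restriction of the coherent assessment $\mathcal{P}_n$ to $\mathcal{F}_0$, is coherent by a further application of Theorem~\ref{CNES}.

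For the sufficiency, assume (i) and (ii). By Theorem~\ref{CNES} it suffices to show that for every nonempty $J\subseteq J_n$ the system $(\Sigma_J)$ associated with $(\mathcal{F}_J,\mathcal{P}_J)$ is solvable. I would split into two cases. If $J\setminus I_0\neq \emptyset$, then (i) together with Theorem~\ref{GILIO-93} delivers solvability of $(\Sigma_J)$ at once. Otherwise $J\subseteq I_0$, so in particular $I_0\neq\emptyset$ and $\mathcal{F}_J\subseteq \mathcal{F}_0$; by (ii) the assessment $\mathcal{P}_0$ is coherent, and Theorem~\ref{CNES} applied to the subfamily $\mathcal{F}_J$ of $\mathcal{F}_0$ gives solvability of $(\Sigma_J)$. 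The case $I_0=\emptyset$ is subsumed by the first subcase, since then $J\setminus I_0=J\neq\emptyset$ for every nonempty $J$.

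The substantive work is really done by Theorem~\ref{GILIO-93}; the only point that is not immediate is the necessity of separately assuming coherence of $\mathcal{P}_0$. The reason this cannot be derived from (i) alone is that every $j\in I_0$ satisfies $M_j=0$, so $H_j$ has upper conditional probability $0$ given $\mathcal{H}_n$ under all solutions of $(\Sigma)$; hence the solutions of $(\Sigma)$ impose no nontrivial constraint on $\mathcal{P}_0$, and conditions internal to $\mathcal{F}_0$ must be checked independently. This is the only delicate step, and it is precisely what makes condition (ii) both necessary and sufficient to complete the characterization.
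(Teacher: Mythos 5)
Your argument is correct. Note that the paper itself does not prove Theorem~\ref{COER-P0}; it is quoted from the literature (Gilio 1993, Theorem~3.3), so there is no in-paper proof to compare against, but your derivation from Theorem~\ref{CNES} (coherence of a finite assessment $\equiv$ solvability of $(\Sigma_J)$ for every nonempty $J\subseteq J_n$) together with Theorem~\ref{GILIO-93} for the subsets $J$ with $J\setminus I_0\neq\emptyset$, and condition (ii) for the subsets $J\subseteq I_0$, is exactly the standard route and is sound. The only pedantic point is that Theorem~\ref{GILIO-93} is stated for proper subsets $J\subset J_n$, so the case $J=J_n$ should be noted as being covered directly by (i) itself, which is immediate.
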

\subsection{Coherent conditional prevision assessments}
We denote by $X$ a random quantity, that is (following de Finetti, see also \cite{lad96}) an 
uncertain real quantity,  which has a well determined but unknown value. We recall that in the axiomatic approach to probability, usually $X$ is defined as a random variable.
We assume that $X$ is a finite random quantity, that is $X$ has a finite set of possible values. 
In particular,  (the indicator of) any given event $A$ is a two-valued random quantity, with $A\in\{0,1\}$. 
Given an event $H \neq \bot$ and a finite random quantity  $X$,  let  $\{x_1, x_2, \ldots, x_r\}$ be
the set of possible values of $X$ restricted to $H$, which means that 
if $H$ is true then $X\in \{x_1, x_2, \ldots, x_r\}$.  As an example, given an event $H\neq \bot$ and a finite random quantity $X$ with a set of  possible values  $\{x_1, x_2, \ldots, x_n\}$.
Let be $A_i=(X=x_i)$, $i=1,2,\ldots,n$. Assume that $A_iH\neq \bot$ for $i=i_1,i_2,\ldots,i_r$ and with  $A_iH= \bot$ otherwise. Then, the set of possible values of $X$ restricted to $H$ is $\{x_{i_1}, x_{i_2}, \ldots, x_{i_r}\}$. Notice that the set $\{x_{i_1}, x_{i_2}, \ldots, x_{i_r}\}$ is nonempty because $\bigvee_{i=1}^n A_iH=(\bigvee_{i=1}^n A_i)H=\Omega H=H\neq \bot$. Indeed, if it were $A_iH=\bot$ for $i=1,\ldots,n$, then it would follows that $H=\bigvee_{i=1}^n A_iH=\bot$ (which is a contradiction).
 
 Agreeing to the betting metaphor, by assessing the prevision of $``X$ {\em conditional on} $H$'' (also named $``X$ {\em given} $H$''), $\pr(X|H)$,  as the amount $\mu$, then for any given  real number $s$ you are willing to pay an amount $\mu s$ and to receive  $Xs$, or $\mu s$, according  to whether $H$ is true, or  false (bet called off), respectively. Then, the random gain associated with the assessment $\pr(X|H)=\mu$ is 
 $G = sH(X-\mu)$.
 We remark that, differently from the notion of  expected value,  in the subjective approach of de Finetti
 the prevision of a random quantity is a primitive notion and its value can be assessed in a direct way. 
  In particular, when $X$ is (the indicator of) an event $A$, then $\prev(X|H)=P(A|H)$. We recall the notion of coherence for conditional prevision assessments (\cite{BiGS08,BiGS12,GiSa14,PeVa17}). 
Given a  function $\pr$ defined on an arbitrary family $\K$ of finite
conditional random quantities, consider a finite subfamily $\F_n = \{X_i|H_i, \, i
\in J_n\} \subseteq \K$  and the vector
$\M_n=(\mu_i, \, i \in J_n)$, where $\mu_i = \pr(X_i|H_i)$, $i\in J_n$.
With the pair $(\F_n,\M_n)$ we associate the random gain $G =
\sum_{i \in J_n}s_iH_i(X_i - \mu_i)$, where $s_1,s_2,\ldots,s_n$ are arbitrary real numbers; moreover, as made for the conditional events, we 
 denote by $\mathcal{D}$ the set of values of $G$ and by
$\mathcal{D}_{\mathcal{H}_n}$, where  $\H_n = H_1 \vee \cdots \vee H_n$,  the set of values of $G$ restricted to $\H_n$. Then, using the {\em betting scheme} of de Finetti, we
have
\begin{definition}\label{COER-RQ}{\rm
		The function $\pr$ defined on $\K$ is coherent if and only if, $\forall n
		\geq 1$,  $\forall \, \F_n \subseteq \K,\, \forall \, s_1, \ldots,
		s_n \in \mathbb{R}$, it holds that: $min \; \mathcal{D}_{\mathcal{H}_n} \; \leq 0 \leq max \;
		\mathcal{D}_{\mathcal{H}_n}$. }
\end{definition}
Given a family $\F_n = \{X_1|H_1,\ldots,X_n|H_n\}$, for each $i \in J_n$ we denote by $\{x_{i1}, \ldots,x_{ir_i}\}$ the set of possible values for the restriction of $X_i$ to $H_i$; then, for each $i \in J_n$ and $j = 1, \ldots, r_i$, we set $A_{ij} = (X_i = x_{ij})$. Of course, for each $i \in J_n$, the family $\{\no{H}_i, A_{ij}H_i \,,\; j = 1, \ldots, r_i\}$ is a partition of the sure event $\Omega$, with  $A_{ij}H_i=A_{ij}$, $\bigvee_{j=1}^{r_i}A_{ij}=H_i$. Then,
the constituents generated by the family $\F_n$ are (the
elements of the partition of $\Omega$) obtained by expanding the
expression $\bigwedge_{i \in J_n}(A_{i1} \vee \cdots \vee A_{ir_i} \vee
\no{H}_i)$. We set $C_0 = \no{H}_1 \cdots \no{H}_n$ ($C_0$ may be equal to $\bot$);
moreover, we denote by $C_1, \ldots, C_m$ the constituents
contained in $\H_n = H_1 \vee \cdots \vee H_n$. Hence
$\bigwedge_{i \in J_n}(A_{i1} \vee \cdots \vee A_{ir_i} \vee
\no{H}_i) = \bigvee_{h = 0}^m C_h$.
With each $C_h,\, h \in J_ m$, we associate a vector
$Q_h=(q_{h1},\ldots,q_{hn})$, where $q_{hi}=x_{ij}$ if $C_h \subseteq
A_{ij},\, j=1,\ldots,r_i$, while $q_{hi}=\mu_i$ if $C_h \subseteq \no{H}_i$;
the vector associated with $C_0$ is $Q_0=\M_n = (\mu_1,\ldots,\mu_n)$. 
Denoting by $\I$ the convex hull of $Q_1, \ldots, Q_m$, the condition  $\M_n\in \I$ amounts to the existence of a vector $(\lambda_1,\ldots,\lambda_m)$ such that:
$ \sum_{h \in J_m} \lambda_h Q_h = \M_n \,,\; \sum_{h \in J_m} \lambda_h
= 1 \,,\; \lambda_h \geq 0 \,,\; \forall \, h$; in other words, $\M_n\in \I$ is equivalent to the solvability of the following system,  associated with  $(\F_n,\M_n)$,
\begin{equation}\label{SYST-SIGMA}
\begin{array}{l}
\sum_{h \in J_m} \lambda_h q_{hi} =
\mu_i \,,\; i \in J_n \,; \; \sum_{h \in J_m} \lambda_h = 1 \,;\;
\lambda_h \geq 0 \,,\;  \, h\in J_m \,.
\end{array}
\end{equation}
Given the assessment $\M_n =(\mu_1,\ldots,\mu_n)$ on  $\F_n =
\{X_1|H_1,\ldots,X_n|H_n\}$, let $S$ be the set of solutions $\Lambda = (\lambda_1, \ldots,\lambda_m)$ of system   (\ref{SYST-SIGMA}).   Then, assuming  the system (\ref{SYST-SIGMA})  solvable, that is  $S \neq \emptyset$, we define:
\begin{equation}\label{EQ:I0}
I_0 = \{i : \max_{\Lambda \in S} \, \sum_{h:C_h\subseteq H_i}\lambda_h= 0\},\;  \F_0 = \{X_i|H_i \,, i \in I_0\},\;  \M_0 = (\mu_i ,\, i \in I_0)\,.
\end{equation}
Then, the following theorem can be proved  (\cite[Theorem~3]{BiGS08}):
\begin{theorem}\label{CNES-PREV-I_0-INT}{\rm [{\em Operative characterization of coherence}]
		A conditional prevision assessment ${\M_n} = (\mu_1,\ldots,\mu_n)$ on
		the family $\F_n = \{X_1|H_1,\ldots,X_n|H_n\}$ is coherent if
		and only if the following conditions are satisfied: \\
		(i) the system  (\ref{SYST-SIGMA}) is solvable;\\ (ii) if $I_0 \neq \emptyset$, then $\M_0$ on $\F_0$ is coherent. }
\end{theorem}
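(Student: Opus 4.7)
The statement is the prevision analogue of Theorem~\ref{COER-P0}, and the plan is to mirror its two-step template while replacing indicators with general finite random quantities. The necessity direction will follow from Gale's theorem of the alternative together with the fact that coherence is hereditary under restriction to subfamilies. The sufficiency direction will use a well-chosen solution of system~(\ref{SYST-SIGMA}) to handle subfamilies that meet $J_n \setminus I_0$ and will invoke hypothesis~(ii) for those entirely contained in $I_0$.

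\emph{Necessity.} Assume $\M_n$ is coherent on $\F_n$. For every choice of $s_1, \ldots, s_n \in \mathbb{R}$, Definition~\ref{COER-RQ} gives $\min \mathcal{D}_{\H_n} \leq 0 \leq \max \mathcal{D}_{\H_n}$; by Gale's theorem of the alternative (exactly as in the derivation of Theorem~\ref{CNES}), this is equivalent to $\M_n \in \I$, i.e., to the solvability of~(\ref{SYST-SIGMA}), yielding~(i). For~(ii), since Definition~\ref{COER-RQ} quantifies over all finite subfamilies of $\K$, coherence of $\M_n$ on $\F_n$ automatically transfers to every sub-assessment $\M_J$ on $\F_J$; in particular, $\M_0$ on $\F_0$ is coherent.

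\emph{Sufficiency.} Assume (i) and (ii). Fix any $J \subseteq J_n$ and reals $\{s_i\}_{i \in J}$, and set $s_i = 0$ for $i \in J_n \setminus J$; the gain to analyse is $G_J = \sum_{i \in J_n} s_i H_i (X_i - \mu_i)$. Split $J = J' \cup J^*$ with $J' = J \setminus I_0$ and $J^* = J \cap I_0$. If $J' = \emptyset$, then $\F_J \subseteq \F_0$ and the required inequality $\min \mathcal{D}_{\H_J} \leq 0 \leq \max \mathcal{D}_{\H_J}$ is immediate from hypothesis~(ii). If $J' \neq \emptyset$, by averaging the maximizers of the linear functions $\Phi_i$ over $S$ build a single solution $\Lambda^* \in S$ of~(\ref{SYST-SIGMA}) with $\sum_{h: C_h \subseteq H_i} \lambda_h^* > 0$ for every $i \in J_n \setminus I_0$. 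Combining the defining equations $\sum_h \lambda_h^* q_{hi} = \mu_i$ of~(\ref{SYST-SIGMA}) with $\sum_h \lambda_h^* = 1$ yields $\sum_{h: C_h \subseteq H_i} \lambda_h^*(X_i(C_h) - \mu_i) = 0$ for each $i$, hence
\[
\sum_{h=1}^m \lambda_h^* \, G_J(C_h) \;=\; \sum_{i \in J} s_i \sum_{h: C_h \subseteq H_i} \lambda_h^* \bigl(X_i(C_h) - \mu_i\bigr) \;=\; 0.
\]
Since $G_J$ vanishes on every $C_h \not\subseteq \H_J$ and the support of $\Lambda^*$ meets $\H_{J'} \subseteq \H_J$, this vanishing weighted sum over a nonempty positively weighted subset forces the existence of some $C_h \subseteq \H_J$ with $G_J(C_h) \geq 0$ and some with $G_J(C_h) \leq 0$, which is exactly the coherence condition on $\H_J$.

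The main obstacle I anticipate is the construction of the simultaneously supported solution $\Lambda^*$: for each $i \in J_n \setminus I_0$ the very definition of $I_0$ supplies some $\Lambda^{(i)} \in S$ with $\Phi_i(\Lambda^{(i)}) > 0$, and one then takes a strictly positive convex combination $\Lambda^* = \sum_i \alpha_i \Lambda^{(i)}$, which remains in the convex set $S$ and inherits $\Phi_i(\Lambda^*) \geq \alpha_i \Phi_i(\Lambda^{(i)}) > 0$ simultaneously for every $i \in J_n \setminus I_0$. Once $\Lambda^*$ is in hand, the rest is a direct algebraic reading of~(\ref{SYST-SIGMA}); hypothesis~(ii) is used only in the subcase $J \subseteq I_0$, which is precisely what makes it indispensable in the characterization.
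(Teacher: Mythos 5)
Your proposal is correct. Note that the paper itself does not prove Theorem~\ref{CNES-PREV-I_0-INT}: it is stated with a citation (Theorem~3 of the reference by Biazzo, Gilio and Sanfilippo, 2008), so there is no in-paper proof to compare against; your argument is, however, a faithful reconstruction along the standard lines of that literature and of the event-case analogue (Theorem~\ref{COER-P0}). The necessity direction (alternative theorem for the full-family gain, plus heredity of coherence to sub-assessments, in particular to $\M_0$ on $\F_0$) is exactly the standard one. In the sufficiency direction, your computation is sound: the system equations of~(\ref{SYST-SIGMA}) do give $\sum_{h:C_h\subseteq H_i}\lambda_h^*(q_{hi}-\mu_i)=0$ for every $i$, the gain of any subfamily vanishes on constituents outside $\H_J$, and the strictly supported solution $\Lambda^*$ (obtained by a positive convex combination of maximizers, legitimate since $S$ is convex and each $\Phi_i$ is linear and nonnegative on $S$) puts positive mass on $\H_J$ whenever $J\setminus I_0\neq\emptyset$, so the zero weighted sum of gains yields $\min\mathcal{D}_{\H_J}\leq 0\leq\max\mathcal{D}_{\H_J}$; the case $J\subseteq I_0$ is correctly delegated to hypothesis~(ii). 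The only methodological difference from the cited treatment is that where the literature typically invokes the prevision analogue of Theorem~\ref{GILIO-93} (solvability of the subsystems $(\Sigma_J)$ for $J\setminus I_0\neq\emptyset$), you prove what is needed directly through the single averaged solution $\Lambda^*$ and the vanishing weighted gain, which makes the sufficiency argument self-contained at no loss of rigor.
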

%
\subsection{Conditional previsions as previsions of conditional random quantities}
		\label{SEC:EXTENDED}
We recall that usually in the literature a conditional random quantity $X|H$ is understood as the restriction of $X$ to $H$, with $X|H$ undefined when $H$ is false. 
By the betting scheme,   if you assess $\pr(X|H)=\mu$, the random quantity that you receive
by paying $\mu$  is   $XH + \mu \no{H}$. From coherence, it holds that  $\prev(XH + \mu \no{H})=\mu$; indeed, if you would assess $\prev(XH + \mu \no{H})=\mu^*\neq \mu$, then the random gain associated with  the assessment $(\mu,\mu^*)$  on  $\{X|H,XH + \mu \no{H}\}$ would be 
\[
G=s_1H(X-\mu)+s_2(XH + \mu \no{H}-\mu^*)\,.
\]
Then, by choosing $s_1=1$ and $s_2=-1$, the random gain $G$ would be equal to the nonzero constant:  $\mu^*-\mu$ (a Dutch book). In what follows, by the symbol $X|H$ we denote  the random quantity $XH+\mu \no{H}$, where  $\mu=\pr(X|H)$. This random quantity, which  extends the restriction of $X$ to $H$,  coincides with  $X$ when $H$ is true and is equal to $\mu$ when $H$ is false  (\cite{GiSa14}; see also \cite{GiSa13c,GiSa13a,lad96}). As shown before the conditional prevision $\pr(X|H)$ is the prevision of the conditional random quantity $X|H$.
In this way, based on the betting scheme  $X|H$ is the  amount that you receive in a bet on $X$ conditional on $H$, if you agree to pay $\pr(X|H)$.  
Moreover,  denoting by $\{x_1, x_2, \ldots, x_r\}$ the set of possible values of $X$ when $H$ is true, and defining $A_i=(X=x_i)$, $i=1,2,\ldots,r$, the family $\{A_1H,\ldots,A_rH,\no{H}\}$ is a partition of the sure event $\Omega$ and we have
\[
X|H = XH + \mu \no{H} = x_1A_1H + \cdots + x_rA_rH + \mu \no{H} \in\{x_1,x_2,\ldots,x_r,\mu\}\,.
\]
In particular, when $X$ is (the indicator of) an event $A$, the prevision of $X|H$ is the probability of the conditional event $A|H$ and, if you assess $P(A|H) = p$, then for the indicator of $A|H$ we have $A|H = AH + p\no{H} \in \{1,0,p\}$. We observe that the choice of $p$ as the value of $A|H$ when $H$ is false has been also considered in some previous works  (\cite{coletti02,Gili90,Jeff91,lad96,mcgee89,StJe94,vanF76}).
\bibliographystyle{plain} 


\end{document}